\DeclareMathOperator{\Gal}{Gal}
\def\Tr{\operatorname{Tr}}
\renewcommand{\phi}{\varphi}
\newtheorem{theorem}{Theorem}[section]
\newtheorem*{thm}{Theorem}
\newtheorem{proposition}[theorem]{Proposition}
\newtheorem{lemma}[theorem]{Lemma}
\newtheorem{corollary}[theorem]{Corollary}
\theoremstyle{definition}
\newtheorem{remark}[theorem]{Remark}
\newtheorem{example}[theorem]{Example}
\def\cqfd{
{\hfill
\kern 6pt\penalty 500
\raise -1pt\hbox{\vrule\vbox to 5pt{\hrule width 4pt
\vfill\hrule}\vrule}}
\break}
\font\tengoth=eufm10
\font\sevengoth=eufm7
\font\fivegoth=eufm5
\def\goth{\fam\gothfam\tengoth}
\def\Sgoth{{\goth S }}
\def\Agoth{{\goth A }}
\title{Differential uniformity of polynomials of degree 10}
\author[Aubry]{Yves Aubry} 
\address[Aubry]{Institut de Math\'ematiques de Toulon - IMATH, Universit\'e de Toulon, France}
\email{yves.aubry@univ-tln.fr}
\address[Aubry]{Institut de Math\'ematiques de Marseille - I2M, Aix Marseille Univ, UMR 7373 CNRS, France}
\email{yves.aubry@univ-amu.fr}
\begin{document} 

%%%%% To ease editing, for IMPAN journals add:

\baselineskip=17pt

%%%%%%%%%%%%%%%%

\begin{abstract}
We prove that polynomials of degree 10 over finite fields of even characteristic with some conditions on theirs coefficients have 
a differential uniformity greater than or equal to 6 over ${\mathbb F}_{2^n}$ for all $n$ sufficiently large.

%\bigskip
%Keywords: 
%MSC[2010] 
\end{abstract}

%\date{\today}

\subjclass[2010]{Primary 11T06; Secondary 11T71, 14G50}

\keywords{Differential uniformity, monodromy groups, Chebotarev theorem}

\thanks{This work is partially supported by the French Agence Nationale de la Recherche through the SWAP project under Contract ANR-21-CE39-0012.}

\maketitle

\section{Introduction}

Differential uniformity of polynomials over finite fields is a measure of non-linearity and resistance against differential attacks in cryptography. Formally, the differential uniformity $\delta_{{\mathbb F}_{q}}(f)$ of a polynomial  $f\in {\mathbb F}_{q}[x]$ 
over the finite field ${\mathbb F}_{q}$ with $q$ elements
is defined as
the maximum number of solutions 
of the set of equations $f(x+ \alpha) - f(x) = \beta$
where $\alpha$
and $\beta$ belong to ${\mathbb F}_q$ with $\alpha$ non-zero (see \cite{Nyberg} where it has been first introduced).
For practical cryptographic applications, a particular study has been made over finite fields of characteristic 2, which will be the 
framework of our work here.
Polynomials over ${\mathbb F}_{2^n}$ with low differential uniformity are 
 highly sought after, especially those with the smallest possible one, namely equal to 2. The functions associated with these polynomials are called APN (Almost Perfect Nonlinear) functions, and exhaustive research suggests that they are very rare. In fact, Voloch proved in \cite{Felipe} that almost all polynomials have a differential uniformity essentially equal to their degree.
 Even better,  Aubry,  Herbaut and  Voloch in \cite{YvesFabienFelipe}  showed that, for a set
 of specific odd degrees, not almost all but indeed all polynomials of these degrees have maximal differential uniformity for $n$ sufficiently large.
 Moreover, these results have been extended in \cite{AHI-J.Algebra} to infinitely many explicit even degrees and in 
\cite{AHI-JNT} to some trinomials of degree divisible by 4.

The study of the differential uniformity of low-degree polynomials was conducted by Voloch in \cite{Felipe}. Apart from the trivial case of polynomials of degrees less than 4, he addressed the cases of degrees 5, 6, and 7 (the case of degree 8 is reduced to that of lower degrees), and he stopped at degree 9.

The main result of our paper concerns polynomials of degree 10 over finite fields of even characteristic. The methods developed in 
\cite{AHI-J.Algebra}  and 
\cite{AHI-JNT}, although applicable to even-degree polynomials, cannot be applied mutatis mutandis to our situation. Therefore, we are led to develop here a specific approach that does not rely on the 
description of the locus of polynomials with non-distinct critical values,
as was the case in 
\cite{YvesFabienFelipe},
\cite{AHI-J.Algebra}  and \cite{AHI-JNT}.

Precisely, we prove the following results.

\begin{thm}(Theorem \ref{Main} and Theorem \ref{cas_a_1=a_3=0})
Let  $f=\sum_{i=0}^{10}a_{10-i}x^i\in{\mathbb F}_{2^n}[x]$
be a polynomial
of degree $10$.

\noindent
1) If
\begin{enumerate}[label=(\roman*)]

\item $a_1a_3\not=0$ and,

\item ${\Tr}_{{\mathbb F}_{2^n}/{\mathbb F}_2}  \left( \frac{a_1a_4+a_5}{a_1^2a_3} \right)=0$ and,

\item $a_1^2a_4^2+a_5^2+a_1^7a_3+a_1^4a_3^2+a_1^2a_3a_5+a_3a_7\not=0$,
\end{enumerate}
then $\delta_{{\mathbb F}_{2^n}}(f)\geq 6$ if $n$ is sufficiently large (namely if $n\geq 13$).

\bigskip
\noindent
2) Suppose that
 $a_1=a_3=0$,
and suppose that there exists $\alpha\in{\mathbb F}_{2^n}^{\ast}$ such that:
\begin{enumerate}[label=(\roman*)]
\item
$c:= \frac{\alpha^2 a_5+ a_7}{\alpha}\not=0$
 and the polynomial $R_3(x):=x^3+bx^2+c^2$ 
has all its  roots in ${\mathbb F}_{2^n}$ where
$b:=\frac{\alpha^5+\alpha a_4+ a_5}{\alpha}$,

\item
and  ${\Tr}_{{\mathbb F}_{2^n}/{\mathbb F}_2}  \left( \frac{\alpha^5+\alpha a_4 + a_5}{\alpha^3}\right)=0$,
\end{enumerate}
then $\delta_{{\mathbb F}_{2^n}}(f)=8$ if $n$ is sufficiently large (namely if $n\geq 15$).
\end{thm}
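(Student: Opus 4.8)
The plan is to reduce both statements to the splitting behaviour of a single quartic, keep careful track of the associated Artin--Schreier (trace) conditions, and then invoke the function-field Chebotarev/Lang--Weil machinery signalled in the keywords. In characteristic $2$ one has $f(x+\alpha)-f(x)=f(x+\alpha)+f(x)=:g_\alpha(x)$, and the involution $x\mapsto x+\alpha$ fixes $g_\alpha$, so its roots occur in pairs $\{x,x+\alpha\}$; equivalently $g_\alpha(x)=h_\alpha(u)$ with $u=x^2+\alpha x$ and $\deg h_\alpha=4$ whenever the coefficient $a_0\alpha^2+a_1\alpha$ of the $x^8$-term does not vanish. Since $\deg g_\alpha=8$, one gets $\delta_{\mathbb{F}_{2^n}}(f)\le 8$ automatically, which is the matching upper bound in part 2. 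The key dictionary is then: a value $\beta$ produces a solution-pair exactly for each root $u\in\mathbb{F}_{2^n}$ of $h_\alpha(u)=\beta$ with $\Tr_{\mathbb{F}_{2^n}/\mathbb{F}_2}(u/\alpha^2)=0$ (so that $x^2+\alpha x=u$ is solvable), each such $u$ contributing two solutions $x$. Counting solutions therefore means counting rational roots of the quartic that additionally satisfy an Artin--Schreier condition.

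For part 1 I would not fix $\alpha$ (none is given): instead I would regard the octic $g_\alpha(x)-\beta$ as a cover over the parameter plane with coordinates $(\alpha,\beta)$ and study its geometric monodromy group $G$. The factorisation $u=x^2+\alpha x$ forces $G\subseteq S_2\wr S_4$ acting on the eight roots as four blocks of two; conditions (i) and (iii) are the non-degeneracy hypotheses ensuring that the quartic in $u$ is separable with the expected monodromy and that the four Artin--Schreier steps do not collapse. Condition (ii), namely $\Tr\!\bigl(\tfrac{a_1a_4+a_5}{a_1^2a_3}\bigr)=0$, is the Artin--Schreier datum that makes the desired configuration occur: I would exhibit a conjugacy class in $G$ for which all four $u$-values are rational but exactly three satisfy $\Tr(u_i/\alpha^2)=0$, so that precisely three blocks lift and yield six solutions $x$. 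Applying the effective function-field Chebotarev theorem — equivalently a Lang--Weil count on the relevant cover of the $(\alpha,\beta)$-plane — then produces a point $(\alpha,\beta)\in\mathbb{F}_{2^n}^2$ of this type, whence $\delta_{\mathbb{F}_{2^n}}(f)\ge 6$, the threshold $n\ge 13$ being where the main term overtakes the error. It is exactly the regime $a_1a_3\ne 0$ that obstructs the eight-solution class (the four trace conditions cannot be met at once), which is why the conclusion is the lower bound $6$ rather than the maximal $8$ of part 2.

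For part 2 the hypothesis provides a concrete $\alpha$, so I would fix it and pass to $w=x/\alpha$, writing the eight solutions as the roots of an octic $G(w)=h_\alpha(\alpha^2(w^2+w))+\beta$; choosing $\beta=g_\alpha(0)$ makes $u=0$ (that is $x\in\{0,\alpha\}$) one block. The vanishing $a_1=a_3=0$ is precisely what removes the $u^2$-term from the complementary cubic factor of $h_\alpha(u)-\beta$, identifying it with $R_3(u)=u^3+bu^2+c^2$, the resolvent governing complete splitting of the octic in characteristic $2$, while $c\ne 0$ keeps $0$ off its root set. I would then show that hypotheses (i) (all roots of $R_3$ in $\mathbb{F}_{2^n}$) and (ii) ($\Tr(b/\alpha^2)=0$, with $b/\alpha^2=\tfrac{\alpha^5+\alpha a_4+a_5}{\alpha^3}$) are together equivalent to all four $u$-values being rational and liftable through $x^2+\alpha x=u$, producing eight distinct solutions. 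Combined with $\delta_{\mathbb{F}_{2^n}}(f)\le 8$ from $\deg g_\alpha=8$, this gives $\delta_{\mathbb{F}_{2^n}}(f)=8$; the bound $n\ge 15$ guarantees the eight preimages are genuinely distinct.

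The principal difficulty in both parts is the Artin--Schreier bookkeeping that converts the single trace hypotheses into simultaneous control of all the liftings. In part 2 this is the verification that $R_3$ really is the correct resolvent and that (i) together with (ii) is equivalent to complete splitting of the octic; in part 1 it is locating inside $G\subseteq S_2\wr S_4$ a conjugacy class producing exactly three liftable blocks, proving via (i) and (iii) that $G$ is large enough to contain it, and explaining why the eight-solution class is unavailable when $a_1a_3\ne 0$. A further technical layer is making the function-field Chebotarev estimate effective enough to extract the explicit thresholds $n\ge 13$ and $n\ge 15$ from the comparison of the main term with the Lang--Weil error.
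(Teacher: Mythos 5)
Your proposal diverges from the paper on both parts, and each part contains a genuine gap. For part 1 the paper's key idea, which you miss, is the \emph{specific} choice $\alpha=a_1/a_0$: it kills the $x^8$-coefficient $a_0\alpha^2+a_1\alpha$ of $D_\alpha f$, so $D_\alpha f$ has degree $6$ and $L_\alpha f$ is a \emph{cubic}. Condition (iii) is then exactly the condition that this cubic is Morse (nondegenerate critical points; distinctness of critical values is automatic for degree $3$), whence by Serre/Geyer its geometric monodromy group is $\mathfrak{S}_3$ and $F/\mathbb{F}_{2^n}(t)$ is geometric; condition (ii) is the additive Hilbert 90 criterion making $\Omega/F$ geometric (Prop.\ 4.6 of \cite{YvesFabienFelipe}); effective Chebotarev (Pollack) with $d_\Omega=24$ and $g_\Omega\le 37$ then yields $\beta$ with $D_\alpha f+\beta$ totally split, i.e.\ six distinct solutions — the ``$6$'' is just $\deg D_{a_1/a_0}f$. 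Your alternative — generic $\alpha$, an octic with monodromy in $S_2\wr S_4$ over the two-dimensional $(\alpha,\beta)$-parameter space, and a conjugacy class with ``exactly three liftable blocks'' — fails on three counts: (a) function-field Chebotarev is a one-variable statement, so over a surface you would need a Lang--Weil-type substitute \emph{and} an actual determination of the wreath-product monodromy, neither of which you supply, and the explicit threshold $n\ge 13$ cannot be extracted without concrete degree and genus data; (b) you misread the hypotheses: (iii) is not separability of a quartic, and (ii) ensures \emph{all} Artin--Schreier liftings are simultaneously available (regularity of $\Omega/F$), not that exactly three of four occur; (c) your claim that $a_1a_3\ne 0$ obstructs the eight-solution configuration is unsubstantiated — the parity of liftable roots is governed by ${\Tr}\bigl(\sum_i u_i/\alpha^2\bigr)={\Tr}\bigl(a_3/(\alpha^2(\alpha a_0+a_1))\bigr)$, which depends on $\alpha$ and need not equal $1$.

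For part 2 there are two concrete errors. First, with $\beta$ chosen so that $u=0$ is a root, the quartic factors as $u(u^3+bu+c)$: the cubic cofactor is $u^3+bu+c$, \emph{not} the resolvent $R_3(x)=x^3+bx^2+c^2$, so hypothesis (i) does not directly control its splitting. Second, and fatally, even when all four $u_i$ are rational, lifting each through $x^2+\alpha x=u_i$ requires ${\Tr}(u_i/\alpha^2)=0$ for every $i$ — three independent conditions, given $u_0+u_1+u_2+u_3=0$ — and the single trace hypothesis (ii) cannot be equivalent to them; this is precisely why the paper does \emph{not} fix a specific $\beta$. Instead, Proposition \ref{Klein} uses Conrad's quadratic and cubic resolvents (reducibility of $R_2$ and $R_3$, via Williams' criterion) to force both arithmetic and geometric monodromy of $\frac{1}{\alpha^2}L_\alpha f$ to be the Klein group; Lemma \ref{Second_Floor} then proves $\Omega/F$ geometric by pole computations at infinity in the Klein subfield lattice together with condition (ii) through Hilbert 90 and an extension of Prop.\ 4.6 of \cite{YvesFabienFelipe}, giving $\Gamma=\overline{\Gamma}=(\mathbb{Z}/2\mathbb{Z})^3$; finally effective Chebotarev with $d_\Omega=32$ produces, for $n\ge 15$, a $\beta$ making $D_\alpha f(x)+\beta\alpha^2$ split with $8$ distinct roots (your ``$n\ge 15$ guarantees distinctness'' has no content for a fixed $\beta$; the threshold is where the Chebotarev main term beats the genus/degree error). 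One observation of yours is salvageable: every nonidentity element of the Klein four-group in $\mathfrak{S}_4$ is fixed-point free, so a separable special fiber with one rational root does force all four $u_i$ rational — but this still leaves the lifting conditions uncontrolled, so the Chebotarev step cannot be bypassed.
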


\begin{remark}
 Functions which are APN over infinitely many extensions of 
 the base field
are called {\sl exceptional} APN. Aubry, McGuire and Rodier conjectured in \cite{A-McG-R} that, up to a certain equivalence, the 
Gold functions $f(x)=x^{2^k+1}$ and the Kasami-Welch functions $f(x)=x^{2^{2k}-2^k+1}$ are the only exceptional APN functions.
The results of the present paper imply that the polynomials of degree 10 satisfying the conditions of our theorem are {\sl a fortiori} not exceptional APN: we recover a known result
since the conjecture in the case of polynomials $f$ of degree $2e$ with $e$ odd and when $f$ contains a term of odd degree has been proved by Aubry, McGuire and Rodier  in \cite{A-McG-R}.
\end{remark}

%%%%%%%%

%%%%%%%%%%

%%%%%%%%%

%%%%%%%%

%%%%%%%%%%
Section \ref{monodromy} is dedicated to the strategy of introducing a polynomial whose splitting field produces a Galois extension in which we will prove the existence of a place which totally splits using Chebotarev's density theorem.
Section \ref{main} focuses on the first part of the previous theorem and relies on Morse polynomial theory to obtain monodromy groups equal to the symmetric group. Finally  Section \ref{Nul} concentrates on the second part of the previous theorem and uses the characterization of the Galois groups of quartic polynomials through their quadratic and cubic resolvents.
%%%%%%%%%

\section{Monodromy groups, Morse polynomials and geometric extensions}\label{monodromy}

Let $f(x) =\sum_{i=0}^{10}a_{10-i}x^i \in {\mathbb F}_{q}[x]$, where $q=2^n$, be a polynomial of degree $m=10$ (so $a_0$ is always supposed to be non-zero). 
Let $\alpha\in{\mathbb F}_q^{\ast}$ and consider  $D_{\alpha}f(x)=f(x+\alpha)+f(x)$  the derivative of $f$ with respect to $\alpha$.
 By definition, the differential uniformity of $f$ is given by
 $$\delta(f):=\max_{(\alpha,\beta)\in{\mathbb F}_q^{\ast}\times{\mathbb F}_q}\sharp\{x\in{\mathbb F}_{q} \mid D_{\alpha}f(x)=\beta\}.$$

Consider  
the unique polynomial $L_{\alpha}f$ 
such that
 $L_{\alpha}f \left( x(x+\alpha)\right)=D_{\alpha}f(x)$
 (see Proposition 2.3 of  \cite{YvesFabienFelipe} for the existence and the unicity of such a polynomial $L_{\alpha}f$)
 and let us denote by $d$ its degree.
 A simple calculation gives :
\begin{multline}\label{Dalphaf}
D_{\alpha}f(x)=(a_0\alpha^2 +a_1\alpha) x^8 
+ a_3\alpha x^6 
+ a_3\alpha^2 x^5
+ (a_3\alpha^3+a_4\alpha^2+a_5\alpha) x^4\\
+ a_3\alpha^4 x^3
+ (a_0\alpha^8+a_3\alpha^5+a_4\alpha^4+a_7\alpha) x^2
+ (a_1\alpha^8+a_3\alpha^6+a_5\alpha^4+a_7\alpha^2) x\\
+a_0\alpha^{10}+a_1\alpha^9+a_2\alpha^8+a_3\alpha^7+a_4\alpha^6+a_5\alpha^5+a_6\alpha^4+a_7\alpha^3+a_8\alpha^2+a_9\alpha
\end{multline}
and
\begin{multline}\label{Lalphaf}
 L_{\alpha}f(x)=(\alpha^2a_0+\alpha a_1) x^4 + \alpha a_3 x^3 +(\alpha^6a_0+\alpha^5a_1+\alpha^2a_4+\alpha a_5)x^2\\
 + 
 (\alpha^7a_1+\alpha^5 a_3+\alpha^3 a_5 + \alpha a_7)x \\
 +
 \alpha^{10}a_0+\alpha^9a_1+\alpha^8a_2+\alpha^7a_3+\alpha^6a_4+\alpha^5a_5+\alpha^4a_6+\alpha^3a_7+\alpha^2a_8
 +\alpha a_9.
\end{multline}

Then we consider  the splitting field  $F$ of the
polynomial 
$L_{\alpha}f(x)-t$
 over the field ${\mathbb F}_q(t)$
 with $t$ a transcendental element over  ${\mathbb F}_q$
and we set  ${\mathbb F}_q^{F}$ to be the algebraic closure of  $\mathbb F_q$ in $F$.
We consider now the Galois groups $G=\Gal(F/{\mathbb F}_q(t))$ and $\overline G=\Gal(F/{\mathbb F}_q^{F}(t))$ which are respectively  the arithmetic and geometric monodromy groups of $L_{\alpha}f$.

If $u_0,\ldots,u_{d-1}$ are the roots of $L_{\alpha}f(x)=t$, then 
we will denote by $x_i$
a root of
$x^2+ \alpha x=u_i$.
So the $2d$ elements $x_0,x_0+\alpha,\ldots,x_{d-1},x_{d-1}+ \alpha$
are the solutions of
$D_{\alpha}f(x)=t$.
Then we consider $\Omega={\mathbb F}_q(x_0,\ldots, x_{d-1})$ 
 the compositum of the fields $F(x_i)$ and  ${\mathbb F}_q^{\Omega}$ 
the algebraic closure of ${\mathbb F}_q$ in $\Omega$.
We set  also $\Gamma=\Gal(\Omega/F)$ and $\overline\Gamma=\Gal(\Omega/F{\mathbb F}_q^{\Omega})$.
Then we have the following diagram where the constant field extensions from $k={\mathbb F}_{2^n}$ are drawn and where ${\mathcal C}_{F}$ and ${\mathcal C}_{\Omega}$ stand for the smooth projective algebraic  curves associated to the function fields $F$ and $\Omega$:

%%%%%%%%%%
% DIAGRAM
%%%%%%%%%%
%$$
%\begin{tikzpicture}[node distance=1.5cm]
% 
% \node (Fqdet)          {${\mathbb F}_q(t)$};
% \node (vide) [above of=Fqdet] {};
%\node (F)   [above of=vide]   {$F={\mathbb F}_q(u_0,\ldots, u_{d-1})$};
% \node (FqFdet) [right of=vide] {${\mathbb F}_q^{F}(t)$};
% 
%  \node (vide2) [above of=F] {};
%\node (Omega)   [above of=vide2]   {$\Omega={\mathbb F}_q(x_0,\ldots, x_{d-1})$};
% \node (FqOmegadeF) [right of=vide2] {$F{\mathbb F}_q^{\Omega}$};
%% \node(equality) [right of=FqOmegadeF] {$  = {\mathbb F}_q^{\Omega}(t)$};
%  
%  \draw (Fqdet) to node[left, midway,scale=0.9]  {$G$} (F);
%  \draw (Fqdet)--(FqFdet);
%  \draw (FqFdet) to node[right, midway,scale=0.9]  {\ $\overline G$} (F);
%  
%  \draw (F) to node[left, midway,scale=0.9]  {$\Gamma$} (Omega);
%  \draw (F)--(FqOmegadeF);
%  \draw (FqOmegadeF) to node[right, midway,scale=0.9]  {\ $\overline \Gamma$} (Omega);
%  \end{tikzpicture}
%$$

\begin{center}
\tiny
\begin{tikzpicture}[node distance=1.7cm]

 \node (k)                  {$k= \mathbb{F}_{2^n}$};
 \node (kt)   [above of=k]          {$k(t)  $};
 \node (vide) [above of=kt] {$k(u_0)$};
 \node (F)  [above of=vide]              {$F=k(u_0,\ldots,u_{d-1})$};
 \node (Fxi) [above of=F]  {$F(x_{d-1})$};
 \node (Omega)  [above of=Fxi]   {$\Omega$};
 
 \node (Fx1)  [left of=Fxi]   {$\ldots$};
 \node (Fxd1)  [left of=Fx1]  {$F(x_{0})$};
 
  \node (A)  [right of=Fxi]   {};
 \node (B)  [right of=A]   {$Fk^{\Omega}$};

\node (vide2) [right of=vide] {}; 
 \node (komegat)   [right of=vide2]          {$k^{F}(t)$};
 \node (vide7)  [right of=komegat]          {};
 %   \node (vide8)  [above of=P1tbis]          {};
  %    \node (vide9)  [above of=vide8]          {};
  %         \node (Cf)  [above of=vide9]          {$\mathcal{C}_{D_{\alpha}f}$};
 \node (komega)   [below of=komegat]          {$k^{F}$};
 \node (kbarre) [above right of=komega] {$k^{\Omega}$};
%\node (tegale) [scale=0.6,below of=P1tbis] {$D_{\alpha} f (x_i) = L_{\alpha}f(x_i^2 + \alpha x_i)  $} ;
%\node (tegalebis) [scale=0.6,below of=tegale] {$= L_{\alpha}f(u_i) = t$} ;
% scale=0.6 $\nabla_{\alpha} f (x_i) = g(x_i^2 + \alpha x_i) = g(u_i) = t$
 \node (kbarret) [above of=kbarre] {$k^{\Omega}(t)$};
   \node (P1tbis)  [right of=kbarret]          {$\mathbb{P}^1_t  / k^{\Omega}$};
   \node (I) [above of=P1tbis] {} ;
     \node (J) [above of=I] {$\mathcal{C}_{\Omega}$} ;

\node (vide3) [above of=kbarret] {}; 
 \node (vide4) [above of=vide3] {}; 
 % \node (omegakbarre) [above of=vide4] {$\Omega {\overline{k}}$};
   \node (vide5) [left of=vide] {};
   \node (vide6) [left of=kt] {};
   \node (P1u) [left of=vide5] {$\mathbb{P}^1_{u_0}/k$};
   \node (P1t) [below of=P1u] {$\mathbb{P}^1_t/k$};
   \node (u) [scale=0.75,right of=P1u] {$u_0$};
 \node (gdeu) [scale=0.75,right of=P1t] {$ t=L_{\alpha}f(u_0)$};

  \node (Cg) [above of=P1u] {$\mathcal{C}_{F}$};
      
 \draw (F)   to node[left, midway,scale=0.65]  {$x_i^2 + \alpha x_i = u_i \ \ $ }  (Fxd1);
 
 \draw (F)  to node[left, midway,scale=0.8]  {$\mathbb{Z} / 2 \mathbb{Z}$} (Fxi) ;
 \draw (F)   -- (Fx1);
 \draw (Fx1)  -- (Omega);
 \draw (Fxd1)  -- (Omega);
 \draw (Fxi)  -- (Omega);
 \draw (k) -- (kt);
 \draw (kt) -- (vide);
  \draw (F) -- (vide);
 \draw (kt) -- (komegat);
 \draw (komega) -- (komegat) ; 
\draw (k) -- (komega); 
 
 \draw (komega) -- (kbarre);   
 \draw (kbarre) -- (kbarret);   
 \draw (komegat) -- (kbarret); 
% \draw (omegakbarre) -- (kbarret);   
% \draw (omegakbarre) -- (Omega);   
%\draw(P1t) to [bend left] (Cg) node[midway]{$\Sgoth_{d}$} ;
\draw(P1t) to [bend left]  node [bend left,midway,left] {$G=\Gal(F/k(t))$} (Cg) ;
\draw(F) to [bend left]  node [bend left,midway,right] {$G$} (kt) ;
 \draw (komegat) to node [right, midway, right]  {$\bar G$} (F) ; 

\draw(Omega) to [bend left]  node [bend left,midway,right] {$\Gamma$} (F) ;

%\draw (F) to [bend right] node [midway, right] {Coucou} (Omega) ;
\draw[->] (P1u) -- (P1t) ;
\draw[->] (Cg) -- (P1u) ;
\draw[|->] (u) -- (gdeu) ;
%\draw[->] (Cf) -- (P1tbis) node[right, midway]{$\bar G \times \bar\Gamma $} ;
\draw[->] (J) -- (P1tbis) node[right, midway]{$\bar G \times \bar\Gamma $} ;

\draw (F) -- (B);
\draw (B) to node [right, midway, right]  {$\bar \Gamma$} (Omega);
\draw (kbarret) -- (B) ;
\end{tikzpicture}

\end{center}

The purpose here is to apply the Chebotarev density theorem in order to get the existence of an element 
$\beta$ in a finite extension $\mathbb F$ of ${\mathbb F}_{2^n}$ such that the polynomial 
$D_{\alpha}f(x)+\beta$ 
splits in $ \mathbb{F}[x]$. Indeed, the Chebotarev  theorem describes the distribution of places in a 
Galois extension of number fields or in a
geometric Galois extension of function fields of one variable over a finite field. It states that for any conjugacy class of the Galois group, there exists a density of places whose Frobenius automorphism falls within that class. For an unramified place, the associated conjugacy class, that is the Artin symbol attached to this place, is reduced to the identity automorphism if and only if the place splits in the Galois extension. 

So the point is to work with 
 a {\sl geometric} (or {\sl regular}) Galois extension $\Omega/{\mathbb F}_q(t)$, that is with no constant field extension.
In other words, we want to find an $\alpha$ such that  $G=\overline{G}$ and $\Gamma=\overline{\Gamma}$.

The regularity of the extension $\Omega/F$ will be derived from Proposition 4.6 of \cite{YvesFabienFelipe} (and a generalization)
and is related to a Trace equation.
The regularity of the extension $F/{\mathbb F}_{q}(t)$, for its part, will come from the theory of Morse polynomials in Section \ref{main}
and from quadratic and cubic resolvents in Section \ref{Nul}.

%%%%%%%%%%%%%%%%%%%%%%%%%%%%%%%%%%%%%%%%%%%%%%%%%%

%%%%%%%%%%%%%%%%%%%%%%%%%%%%%%%%%%%%%%%%%%%%%%%%%%
%%%%%%%%%%%%%%%%%%%%%%%%%%%%%%%%%%%%%%%%%%%%%%%%%%

%%%%%%%%%%%%%%%%%%%%%%%%%%%%%%%%%%%%%%%%%%%%%%%%%%

%%%%%%%%%%%%%%%%%%%%%%%%%%%%%%%%%%%%%%%%%%%%%%%%%%
%%%%%%%%%%%%%%%%%%%%%%%%%%%%%%%%%%%%%%%%%%%%%%%%%%

\section{The result with $a_1a_3\not=0$}\label{main}

Let $f(x) =\sum_{i=0}^{10}a_{10-i}x^i \in {\mathbb F}_{2^n}[x]$ be a polynomial of degree $m=10$  with $a_1\not=0$ and $a_3\not= 0$.
Consider the choice:
$$\alpha=a_1/a_0.$$

Then Formulas (\ref{Dalphaf}) and (\ref{Lalphaf}) give that the polynomial 
$$D_{\frac{a_1}{a_0}}f(x)=\frac{a_1^3a_3}{a_0^3} x^6 +\cdots$$
 has degree 6 and the polynomial
 \begin{align*}
 L_{\frac{a_1}{a_0}}f(x) & =\frac{a_1a_3}{a_0}x^3
 +\left(\frac{a_1^2a_4}{a_0^2}+\frac{a_1a_5}{a_0}\right) x^2
+ \left(\frac{a_1^8}{a_0^7}+\frac{a_1^5a_3}{a_0^5}+\frac{a_1^3a_5}{a_0^3}+\frac{a_1a_7}{a_0}\right)x \\
&
+\frac{a_1^{8}a_2}{a_0^8}+\frac{a_1^7a_3}{a_0^7} + \frac{a_1^6a_4}{a_0^6}+\frac{a_1^5a_5}{a_0^5}+
\frac{a_1^4a_6}{a_0^4}+\frac{a_1^3a_7}{a_0^3}+\frac{a_1^2a_8}{a_0^2}+\frac{a_1a_9}{a_0}\\
\end{align*}
 has degree $d=3$.

Recall that a polynomial $g\in{\mathbb F}_{2^n}[x]$ is said to be Morse (see the Appendix of Geyer to the paper \cite{JardenRazon})
if it has odd degree, if the critical points of $g$ are non degenerate (i.e. the derivative $g'$ and the second Hasse-Schmidt derivative $g^{[2]}$ have no common roots) and if the critical values of $g$ are distinct ($g$ does not take the same value at different zeros of $g'$).
We have:

\begin{proposition}\label{Morse}
Let $f=\sum_{i=0}^{10}a_{10-i}x^i\in{\mathbb F}_{2^n}[x]$ be a polynomial
of degree $10$.
If
\begin{enumerate}[label=(\roman*)]
\item $a_1a_3\not=0$,
and
\item $a_0^4a_1^2a_4^2+a_0^6a_5^2+a_1^7a_3+a_0^2a_1^4a_3^2+a_0^4a_1^2a_3a_5+a_0^6a_3a_7\not=0$,
\end{enumerate}
then the polynomial $L_{\frac{a_1}{a_0}}f$ is Morse.
\end{proposition}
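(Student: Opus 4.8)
The plan is to verify the three defining conditions of a Morse polynomial directly for $g := L_{a_1/a_0}f$, exploiting the rigidity that a cubic acquires in characteristic $2$. Writing $g(x) = Ax^3 + Bx^2 + Cx + D$ with leading coefficients read off from the displayed formula,
$$A = \frac{a_1a_3}{a_0},\qquad B = \frac{a_1(a_1a_4 + a_0a_5)}{a_0^2},\qquad C = \frac{a_1^8}{a_0^7} + \frac{a_1^5a_3}{a_0^5} + \frac{a_1^3a_5}{a_0^3} + \frac{a_1a_7}{a_0},$$
hypothesis (i) ensures $A\neq 0$, so $g$ has degree exactly $3$ and in particular odd degree. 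That disposes of the first requirement immediately.

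First I would compute the first derivative and the second Hasse--Schmidt derivative in characteristic $2$. Expanding $g(x+y)$ and collecting powers of $y$ (using $(x+y)^3 = x^3 + x^2y + xy^2 + y^3$ and $(x+y)^2 = x^2 + y^2$) yields $g'(x) = Ax^2 + C$ and $g^{[2]}(x) = Ax + B$. The key structural point is that $g'$ carries no linear term, hence is a perfect square: since Frobenius is bijective on $\overline{\mathbb{F}}_2$, I may set $\gamma := \sqrt{C/A}$ and write $g'(x) = A(x+\gamma)^2$. Thus $g'$ has the single (double) root $\gamma$, so $g$ possesses exactly one critical point.

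Having only one critical point makes the third Morse condition, distinctness of the critical values, hold vacuously, since there are no two distinct zeros of $g'$. It then remains to check non-degeneracy, namely that $g'$ and $g^{[2]}$ share no root. Equivalently, $\gamma$ must not be a zero of $g^{[2]}$, i.e. $A\gamma + B \neq 0$; since $A\gamma = \sqrt{AC}$, this reads $\sqrt{AC}\neq B$, which after squaring is exactly $B^2 + AC \neq 0$.

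The last step, and the only place requiring care, is the bookkeeping that matches this single inequality with hypothesis (ii). Using $(a_1a_4 + a_0a_5)^2 = a_1^2a_4^2 + a_0^2a_5^2$ in characteristic $2$, a direct expansion gives $B^2$ and $AC$, and clearing denominators by the nonzero factor $a_0^8/a_1^2$ transforms $B^2 + AC$ into precisely $a_0^4a_1^2a_4^2 + a_0^6a_5^2 + a_1^7a_3 + a_0^2a_1^4a_3^2 + a_0^4a_1^2a_3a_5 + a_0^6a_3a_7$. Hence $B^2+AC\neq 0$ is equivalent to (ii), completing the argument. I expect the main (indeed only) obstacle to be this final algebraic identification, where the characteristic-$2$ arithmetic must be tracked scrupulously; the conceptual content reduces to the single observation that in characteristic $2$ a cubic has a unique critical point, so that the ``distinct critical values'' condition is automatic and only non-degeneracy carries information.
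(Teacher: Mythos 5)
Your proof is correct and follows essentially the same route as the paper's: odd degree is immediate, distinctness of critical values is vacuous because $g'=Ax^2+C$ has a single double root in characteristic $2$, and non-degeneracy reduces to $B^2+AC\neq 0$, which after clearing denominators is exactly hypothesis (ii). The only cosmetic difference is that you test whether the root of $g'$ annihilates $g^{[2]}$, while the paper tests whether the root of the linear polynomial $g^{[2]}$ annihilates $g'$; the two checks are trivially equivalent and yield the same condition.
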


\begin{proof}
Let $f=\sum_{i=0}^{10}a_{10-i}x^i$ be as in the theorem and set $g=L_{\frac{a_1}{a_0}}f$.
The polynomial $g$ has odd degree (its degree is 3) and the critical values of $g$ are obviously distinct since $g'$ has degree 2 and thus has only one double root.

Now let us find a necessary and sufficient condition for the critical points of $g$ to be nondegenerate.
We have $g'(x)=\frac{a_1a_3}{a_0}x^2+ 
\frac{a_1^8}{a_0^7}+\frac{a_1^5a_3}{a_0^5}+\frac{a_1^3a_5}{a_0^3}+\frac{a_1a_7}{a_0}$.

Recall that the Hasse-Schmidt derivative $g^{[2]}$  is defined by 
the equality 
$g(t+u) \equiv g(t)+g'(t)u + g^{[2]}(t)u^2 \pmod{u^3}$
where $u$ and $t$ are independent variables.
Then we get here: $g^{[2]}(x)=\frac{a_1a_3}{a_0} x + \frac{a_1^2a_4}{a_0^2}+\frac{a_1a_5}{a_0}$ which has
$x=\frac{a_0a_5+a_1a_4}{a_0a_3}$ as a root. And this root is also a root of $g'$ if and only if 
$$a_0^4a_1^3a_4^2+a_0^6a_1a_5^2+a_1^8a_3+a_0^2a_1^5a_3^2+a_0^4a_1^3a_3a_5+a_0^6a_1a_3a_7=0.$$
Thus condition (ii) ensures that the polynomial $g=L_{\frac{a_1}{a_0}}f$ is Morse.
\end{proof}

%%%%%%%%

%%%%%%%%%%

%%%%%%%%%

%%%%%%%%

%%%%%%%%%%

%%%%%%%%%

\begin{theorem}\label{Main}
 For $n$ sufficiently large, namely for $n\geq 13$, 
for all polynomials $f=\sum_{i=0}^{10}a_{10-i}x^i\in{\mathbb F}_{2^n}[x]$
of degree $10$ such that :

\begin{enumerate}[label=(\roman*)]

\item $a_1a_3\not=0$ and,

\item ${\Tr}_{{\mathbb F}_{2^n}/{\mathbb F}_2}  \left( \frac{a_1a_4+a_5}{a_1^2a_3} \right)=0$ and,

\item $a_1^2a_4^2+a_5^2+a_1^7a_3+a_1^4a_3^2+a_1^2a_3a_5+a_3a_7\not=0$,

\end{enumerate}

\noindent
we have  $\delta_{{\mathbb F}_{2^n}}(f)\geq 6$.
\end{theorem}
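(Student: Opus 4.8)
The plan is to exhibit a single value $\beta\in{\mathbb F}_{2^n}$ for which $D_{\alpha}f(x)=\beta$ (with $\alpha=a_1/a_0$) has six solutions in ${\mathbb F}_{2^n}$; by the very definition of $\delta$ this forces $\delta_{{\mathbb F}_{2^n}}(f)\geq 6$. Since $\delta(cf)=\delta(f)$ for every nonzero constant $c$ (as $D_{\alpha}(cf)=c\,D_{\alpha}f$ and division by $c$ is harmless in characteristic $2$), I first rescale so that $a_0=1$. Under this normalization condition (iii) of the present theorem is precisely condition (ii) of Proposition \ref{Morse}, and condition (ii) is the trace equation governing the upper layer of the tower, so both hypotheses transfer verbatim to the normalized polynomial. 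Recall the structure fixed in Section \ref{monodromy}: with $\alpha=a_1/a_0$ (nonzero by (i)) the polynomial $D_{\alpha}f$ has degree $6$ and $L_{\alpha}f$ has degree $d=3$, and the six roots of $D_{\alpha}f(x)=t$ are produced by first solving the cubic $L_{\alpha}f(u)=t$ for $u_0,u_1,u_2$ and then solving each Artin--Schreier equation $x_i^2+\alpha x_i=u_i$. I will show that $\Omega/{\mathbb F}_q(t)$ is a \emph{geometric} (regular) Galois extension, so that Chebotarev's density theorem in the function-field setting yields a degree-one place of ${\mathbb P}^1_t$ splitting completely in $\mathcal{C}_{\Omega}$; such a place is exactly a $\beta\in{\mathbb F}_{2^n}$ at which all six roots become rational.

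Regularity is checked layer by layer. For the lower layer $F/{\mathbb F}_q(t)$, Proposition \ref{Morse} shows that conditions (i) and (iii) make $L_{\alpha}f$ a Morse polynomial, and by Geyer's theory of Morse polynomials (Appendix to \cite{JardenRazon}) the geometric monodromy group $\overline{G}$ is the full symmetric group $S_3$. Since the constant-field extension ${\mathbb F}_q^{F}(t)/{\mathbb F}_q(t)$ is Galois we have $\overline{G}\trianglelefteq G\subseteq S_3$, so the equality $\overline{G}=S_3$ forces $G=\overline{G}=S_3$, i.e.\ $F/{\mathbb F}_q(t)$ is regular. For the upper layer $\Omega/F$, assembled from the three Artin--Schreier extensions $x_i^2+\alpha x_i=u_i$, its regularity $\Gamma=\overline{\Gamma}$ will be deduced from Proposition 4.6 of \cite{YvesFabienFelipe} and its generalization, which reduce the non-growth of the constant field to a single trace equation. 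Condition (ii), namely ${\Tr}_{{\mathbb F}_{2^n}/{\mathbb F}_2}\!\big(\tfrac{a_1a_4+a_5}{a_1^2a_3}\big)=0$, is exactly that equation: since $\alpha^2=a_1^2$ and $\tfrac{a_1a_4+a_5}{a_3}=u_0+u_1+u_2$ by Vieta, the condition reads $\sum_i{\Tr}(u_i/\alpha^2)=0$, which is the obstruction to a constant-field growth in the compositum of the $x_i^2+\alpha x_i=u_i$. Combining the two layers gives that $\Omega/{\mathbb F}_q(t)$ is geometric, with Galois group contained in the wreath product $({\mathbb Z}/2{\mathbb Z})\wr S_3$.

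Finally I would invoke the effective Chebotarev density theorem for the regular extension $\Omega/{\mathbb F}_q(t)$. The number of degree-one places of ${\mathbb P}^1_t$ over ${\mathbb F}_q$ whose Frobenius is trivial, that is the places splitting completely, is $q/|\Gal(\Omega/{\mathbb F}_q(t))|$ up to an error $O(g_{\Omega}\,q^{1/2})$ coming from the Weil bounds, where $g_{\Omega}$ is the genus of $\mathcal{C}_{\Omega}$. Estimating $g_{\Omega}$ by applying Riemann--Hurwitz to the two explicit layers of the tower, together with the order of the monodromy group, the main term dominates once $q=2^n$ is large enough; carrying out this estimate explicitly yields the stated threshold $n\geq 13$. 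Any such place produces $\beta\in{\mathbb F}_{2^n}$ for which $D_{\alpha}f(x)=\beta$ splits into six distinct roots of ${\mathbb F}_{2^n}$, whence $\delta_{{\mathbb F}_{2^n}}(f)\geq 6$.

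I expect the principal obstacle to be the regularity of the upper layer $\Omega/F$: pinning down the precise trace equation furnished by the generalization of Proposition 4.6 and verifying that it coincides with condition (ii), including confirming that the geometric group $\overline{\Gamma}$ is the full $({\mathbb Z}/2{\mathbb Z})^3$ so that the tower has no hidden relations among the three Artin--Schreier extensions. The second delicate point is the genus computation feeding the effective bound, since it is what pins down the explicit value $n\geq 13$ rather than a mere ``$n$ sufficiently large''.
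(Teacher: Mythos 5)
Your proposal follows the paper's proof along essentially the same lines: normalize to $a_0=1$, use Proposition \ref{Morse} together with Geyer's theorem on Morse polynomials to get $G=\overline{G}=\mathfrak{S}_3$ (regularity of $F/{\mathbb F}_q(t)$), invoke Proposition 4.6 of \cite{YvesFabienFelipe} with the trace condition (ii) for regularity of $\Omega/F$, and conclude with Pollack's effective Chebotarev theorem. The architecture is identical to the paper's; however, there is one genuine misunderstanding in your treatment of the upper layer, and it is exactly the point that controls the explicit threshold $n\geq 13$ appearing in the statement.

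You write that one should confirm that ``the geometric group $\overline{\Gamma}$ is the full $({\mathbb Z}/2{\mathbb Z})^3$ so that the tower has no hidden relations among the three Artin--Schreier extensions.'' This is backwards: there is always a hidden relation, and the proof exploits it rather than rules it out. By the Vieta identity you yourself invoke, $u_0+u_1+u_2=b_1/b_0=\frac{a_1a_4+a_5}{a_3}$ is a constant of ${\mathbb F}_q$; hence $(x_0+x_1+x_2)^2+\alpha(x_0+x_1+x_2)$ equals this constant, so $x_2$ lies in $F(x_0,x_1)(y)$ where $y^2+\alpha y=b_1/b_0$. Consequently $\overline{\Gamma}$ is never larger than $({\mathbb Z}/2{\mathbb Z})^2$, and condition (ii) (via Hilbert 90) is precisely what makes $y$ rational over ${\mathbb F}_{2^n}$, i.e.\ what forces $\Gamma=\overline{\Gamma}=({\mathbb Z}/2{\mathbb Z})^2$ instead of $\Gamma$ acquiring a quadratic constant-field extension; this is how Proposition 4.6 of \cite{YvesFabienFelipe} is used in the paper. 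The quantitative consequence is $d_\Omega=3!\cdot 2^2=24$, not the order $48$ of the wreath product $({\mathbb Z}/2{\mathbb Z})\wr \mathfrak{S}_3$ that your reading suggests; combined with the genus bound from Lemma 14 of \cite{Pollack}, namely $g_\Omega\leq\frac12(\deg D_\alpha f-3)\,d_\Omega+1=37$, this gives $V\geq 1$ for $n\geq 13$. With an order-$48$ group the same estimates would only produce a weaker threshold (around $n\geq 15$), so your assertion that the computation ``yields the stated threshold $n\geq 13$'' is not actually available until $\Gamma$ is identified correctly; that identification, not a fullness statement for $\overline{\Gamma}$, is the missing step.
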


\begin{proof}
Since the differential uniformity of a polynomial is unchanged if it is multiplied by a non-zero scalar element, one can suppose that $f$ is monic i.e. $a_0=1$.
Conditions (i) and (iii) together with 
Proposition \ref{Morse} imply that $L_{a_1}f$ is a Morse polynomial of degree $d=3$.
But
the analogue of the Hilbert theorem given by Serre in Theorem 4.4.5 of \cite{Serre} (and detailled in even characteristic in the Appendix of Geyer in \cite{JardenRazon}) asserts that  the 
geometric monodromy group of a Morse polynomial  of degree $d$
is the symmetric group
$\Sgoth_d$. But since it is contained in its arithmetic monodromy group  which is also a subgroup of $\Sgoth_d$, they coincide.
Hence we deduce that the extension $F/{\mathbb F}_{2^n}(t)$ is geometric.

Moreover, Proposition 4.6 of \cite{YvesFabienFelipe} gives us that the extension $\Omega/F$ will be geometric if there exists 
$x\in{\mathbb F}_{2^n}$ such that $x^2+\alpha x=b_1/b_0$, where the $b_i$'s are given by
$L_{\alpha}f(x)=\sum_{k=0}^db_{d-k}x^k$. In our case, the equation reduces to
$x^2+a_1x=(a_1^2a_4+a_1a_5)/a_1a_3$.
Hilbert'90 theorem implies that the equation $x^2+ a_1 x = \frac{a_1a_4+a_5}{a_3}$ has a solution in $\mathbb{F}_{2^n}$ if and only if ${\Tr}_{{\mathbb F}_{2^n}/{\mathbb F}_2}  \left( \frac{a_1a_4+a_5}{a_1^2a_3} \right)=0$, which is exactly condition (ii) of the theorem.

Thus Proposition 4.6 of \cite{YvesFabienFelipe}  implies that the extension $\Omega/F$ is geometric.
Then we can apply the effective version of the Chebotarev density theorem given by Pollack in \cite{Pollack}
 to get the following lower bound (depending on $n$, the degree $d_{\Omega}$ of the extension $\Omega/{\mathbb F}_{2^n}(t)$ and the genus $g_{\Omega}$ of the function field $\Omega$)
 for the number $V$ of places of degree one in ${\mathbb F}_{2^n}(t)$ which  totally split in $\Omega$ (see for more details the proof of Theorem 4.1 of \cite{AHI-J.Algebra}):
 $$V\geq \frac{2^n}{d_{\Omega}}-\frac{2}{d_{\Omega}}(g_{\Omega}2^{n/2}+g_{\Omega}+d_{\Omega}).$$

 If $n$ is sufficiently large, this number is at least one.
 To be explicit, we have seen above that $G=\overline G={\frak S}_3$ and moreover,
 by 
 Proposition 4.6 of \cite{YvesFabienFelipe}, we have that $\Gamma=\overline{\Gamma}=({\mathbb Z}/2{\mathbb Z})^2$, so 
 $d_{\Omega}=3!\times 2^{2}=24$.
 Hence $V\geq 1$ as soon as $2^n-2g_{\Omega} 2^{n/2} -2g_{\Omega}-72 >0$.
 
 Now by Lemma 14 of \cite{Pollack} we have 
 $g_{\Omega}\leq \frac{1}{2}(\deg D_{\alpha}f-3)d_{\Omega}+1=37$.
 Hence if $n\geq 13$ we have $V\geq 1$ and this
  gives the existence of $\beta\in {\mathbb F}_{2^n}$ such that the polynomial 
$D_{\alpha}f(x)+\beta$ 
splits in $ \mathbb{F}_{2^n}[x]$ with no repeated factors. The differential uniformity of $f$ is thus greater than or equal to the degree of $D_{\alpha}f$, which is 6 in our present case.
\end{proof}

It implies for example that the polynomial
$f(x)=x^{10}+x^9+x^7+x^3$
has a differential uniformity over ${\mathbb F}_{2^n}$ greater than or equal to 6 pour $n\geq 13$.

\begin{corollary}
All polynomials
$$f(x)=x^{10}+a_1x^9+a_2x^8+a_3x^7+a_6x^4+a_7x^3+a_8x^2+a_9x+a_{10}$$
with $a_1, a_3$ in ${\mathbb F}_{2^n}^{\ast}$ and
$a_2, a_6, a_7, a_8, a_9,a_{10}$ in ${\mathbb F}_{2^n}$  and such that 
$a_7\not=a_1^7+a_1^4a_3$
have a differiential uniformity over ${\mathbb F}_{2^n}$ greater than or equal to 6 pour $n$ sufficiently large.
\end{corollary}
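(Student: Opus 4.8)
The plan is to apply Theorem \ref{Main} directly, checking that each of its three hypotheses (i), (ii), (iii) is satisfied for the family of polynomials in the corollary. First I would note that for the polynomials $f(x)=x^{10}+a_1x^9+a_2x^8+a_3x^7+a_6x^4+a_7x^3+a_8x^2+a_9x+a_{10}$ we have $a_0=1$ and, crucially, the coefficients $a_4$ and $a_5$ (those of $x^6$ and $x^5$) are equal to zero. Hypothesis (i), namely $a_1a_3\neq 0$, is immediate from the assumption that $a_1,a_3\in{\mathbb F}_{2^n}^{\ast}$.

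Next I would verify hypothesis (ii). Since $a_4=a_5=0$, the trace condition ${\Tr}_{{\mathbb F}_{2^n}/{\mathbb F}_2}\left(\frac{a_1a_4+a_5}{a_1^2a_3}\right)=0$ collapses to ${\Tr}_{{\mathbb F}_{2^n}/{\mathbb F}_2}(0)=0$, which holds trivially. So with $a_4=a_5=0$ condition (ii) is automatically satisfied and imposes no constraint on the remaining coefficients. This is the main simplification exploited by the corollary: by killing the $x^6$ and $x^5$ coefficients one renders the trace condition vacuous.

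Finally I would substitute $a_4=a_5=0$ into the non-vanishing condition (iii). The expression $a_1^2a_4^2+a_5^2+a_1^7a_3+a_1^4a_3^2+a_1^2a_3a_5+a_3a_7$ reduces to $a_1^7a_3+a_1^4a_3^2+a_3a_7=a_3\bigl(a_1^7+a_1^4a_3+a_7\bigr)$. Since $a_3\neq 0$, this is nonzero precisely when $a_7\neq a_1^7+a_1^4a_3$, which is exactly the hypothesis imposed in the corollary. Thus all three conditions of Theorem \ref{Main} hold, and the conclusion $\delta_{{\mathbb F}_{2^n}}(f)\geq 6$ for $n$ sufficiently large follows immediately.

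I do not anticipate any genuine obstacle here, since the corollary is a direct specialization of Theorem \ref{Main} to the subfamily with $a_4=a_5=0$; the only point requiring care is the routine algebraic simplification of condition (iii), and in particular correctly factoring out $a_3$ and matching the residual factor $a_1^7+a_1^4a_3+a_7$ against the stated inequality $a_7\neq a_1^7+a_1^4a_3$.
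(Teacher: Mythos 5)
Your proposal is correct and is precisely the intended argument: the paper offers no separate proof because the corollary is the direct specialization of Theorem \ref{Main} to the subfamily with $a_0=1$ and $a_4=a_5=0$, where (i) holds by hypothesis, (ii) becomes ${\Tr}_{{\mathbb F}_{2^n}/{\mathbb F}_2}(0)=0$, and (iii) factors as $a_3\bigl(a_1^7+a_1^4a_3+a_7\bigr)\neq 0$, equivalent in characteristic $2$ to $a_7\neq a_1^7+a_1^4a_3$. Your verification matches this exactly, including the explicit bound $n\geq 13$ hidden behind ``$n$ sufficiently large.''
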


%%%%%%%%%%%%

%%%%%%%%%%%%

%%%%%%%%%%%%

%%%%%%%%%%%%

%%%%%%%%%%%%

%%%%%%%%%%%%
%%%%%%%%%%%%

%%%%%%%%%%%%

%%%%%%%%%%%%
%%%%%%%%%%%%

%%%%%%%%%%%%

%%%%%%%%%%%%
%%%%%%%%%%%%

%%%%%%%%%%%%

%%%%%%%%%%%%

\section{The case  with $a_1=0$ and $a_3=0$}\label{Nul}

Making the choice $\alpha=a_1/a_0$ in the previous section gave a polynomial $D_{\alpha}f$ of degree 6, so the number of solutions
of any equation $D_{\alpha}f(x)=\beta$ could be at most 6.
If we choose $\alpha\not=a_1/a_0$ then the polynomial $D_{\alpha}f$ will be of degree 8 and  the equation $D_{\alpha}f(x)=\beta$ can have 8 solutions.
Let us study what happens in a particular case of this situation.

Suppose without loss of generality that $a_0=1$ and let $\alpha\in{\mathbb F}_{2^n}^{\ast}$ be such that $\alpha +a_1\not=0$ i.e. $\alpha\not=a_1$.
Then, by Formulas (\ref{Dalphaf}) and (\ref{Lalphaf}), we deduce that $D_{\alpha}f$ has degree 8 and $L_{\alpha}f$ has degree $d=4$.
The following proposition gives conditions for the algebraic and geometric monodromy groups of $\frac{1}{\alpha^2}L_{\alpha}f(x)$
to be the Klein group ${\mathbb Z}/2{\mathbb Z}\times{\mathbb Z}/2{\mathbb Z}$.

\begin{proposition}\label{Klein}
Let $f=\sum_{i=0}^{10}a_{10-i}x^i\in{\mathbb F}_{2^n}[x]$ be a polynomial
of degree $10$ with $a_0=1$, $a_1=a_3=0$.
Let $\alpha\in{\mathbb F}_{2^n}^{\ast}$ and set
$b:=\frac{\alpha^5+\alpha a_4+ a_5}{\alpha}$ and $c:= \frac{\alpha^2 a_5+ a_7}{\alpha}$.
Suppose that $c\not=0$ and that the polynomial $R_3(x):=x^3+bx^2+c^2$ 
factors over ${\mathbb F}_{2^n}$ as the product of three linear factors
(which means that ${\Tr}_{{\mathbb F}_{2^n}/{\mathbb F}_2}  \left( \frac{b^3}{c^2} \right)={\Tr}_{{\mathbb F}_{2^n}/{\mathbb F}_2}(1)$ and the roots of the polynomial $Q(T):=T^2+c^2T+b^6$ 
 are cubes in ${\mathbb F}_{2^n}$
(respectively in  ${\mathbb F}_{2^{2n}}$)
 if $n$ is even (respectively if $n$ is odd).
%are cubes in ${\mathbb F}_{2^n}$ if $n$ is even, or in ${\mathbb F}_{2^{2n}}$ if $n$ is odd).

 Then
the quartic polynomial
$\frac{1}{\alpha^2}L_{\alpha}f(x)$
has algebraic and geometric monodromy groups isomorphic to the Klein group.
\end{proposition}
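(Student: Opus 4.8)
The plan is to read off the quartic explicitly from Formula~(\ref{Lalphaf}), reduce the determination of the two monodromy groups to a resolvent-cubic computation over the rational function field, and then justify the stated trace/cube criterion for that cubic to split. Substituting $a_0=1$, $a_1=a_3=0$ into~(\ref{Lalphaf}) and dividing by $\alpha^2$, the coefficient of $x^3$ vanishes while the coefficients of $x^2$ and $x$ collapse exactly to $b$ and $c$, giving
\[
\tfrac{1}{\alpha^2}L_{\alpha}f(x)=x^4+bx^2+cx+d
\]
for some constant $d\in{\mathbb F}_{2^n}$. Multiplying a polynomial by a nonzero scalar and translating the transcendental $t$ by a constant leave the splitting field over ${\mathbb F}_{2^n}(t)$, hence the monodromy groups $G$ and $\overline G$, unchanged; so it suffices to treat $P(x):=x^4+bx^2+cx+t$. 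Here $P'(x)=c\neq 0$, so $P$ is separable and $G$ embeds into $\Sgoth_4$.

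The second ingredient is geometric. Writing $P(x)=h(x)+t$ with $h(x)=x^4+bx^2+cx$ of degree $4$, the extension $\overline{{\mathbb F}}_{2^n}(x)/\overline{{\mathbb F}}_{2^n}(h(x))$ has degree $4$, so $P$ is irreducible already over $\overline{{\mathbb F}}_{2^n}(t)$; thus $\overline G$, and a fortiori $G$, acts transitively on the four roots. I then introduce the resolvent cubic $R_3(y)=\prod_i(y-\theta_i)$ attached to the three pairings $\theta_1=x_1x_2+x_3x_4$, $\theta_2=x_1x_3+x_2x_4$, $\theta_3=x_1x_4+x_2x_3$, which are permuted through the quotient $\Sgoth_4/V$ by the Klein subgroup $V=\{e,(12)(34),(13)(24),(14)(23)\}$. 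Expanding the coefficients in the elementary symmetric functions $e_1=0$, $e_2=b$, $e_3=c$, $e_4=t$ and using $4=0$ and $e_1=0$, every $t$-dependent term cancels and one is left with $R_3(y)=y^3+by^2+c^2\in{\mathbb F}_{2^n}[y]$, which is separable because $c\neq 0$.

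The conclusion is then purely group-theoretic. By hypothesis $R_3$ splits into three linear factors over ${\mathbb F}_{2^n}$, so $\theta_1,\theta_2,\theta_3\in{\mathbb F}_{2^n}$ are fixed by both $G$ and $\overline G$; hence their images in $\Sgoth_4/V\cong\Sgoth_3$ are trivial, i.e.\ $\overline G\subseteq G\subseteq V$. Since a transitive subgroup of $\Sgoth_4$ has order divisible by $4$, the only transitive subgroup of the order-$4$ group $V$ is $V$ itself; transitivity of $\overline G$ then forces $\overline G=G=V$, the Klein group. I would emphasize that it is precisely this resolvent argument, and not a discriminant computation (which degenerates in characteristic $2$), that separates $V$ from the cyclic group $C_4$, whose image in $\Sgoth_3$ is a transposition.

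It remains to verify the parenthetical criterion for the complete splitting of $R_3$, which I expect to be the main technical obstacle. Depressing by $y\mapsto y+b$ sends $R_3$ to $y^3+b^2y+c^2$, whose Lagrange resolvent is the quadratic $Q(T)=T^2+c^2T+b^6$, with roots the cubes of the two Lagrange resolvents of the depressed cubic. The Artin--Schreier substitution $T=c^2S$ turns $Q$ into $S^2+S=b^6/c^4$, so the factorization type of $Q$ over ${\mathbb F}_{2^n}$ is governed by $\Tr_{{\mathbb F}_{2^n}/{\mathbb F}_2}(b^6/c^4)=\Tr_{{\mathbb F}_{2^n}/{\mathbb F}_2}(b^3/c^2)$. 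Since a primitive cube root of unity lies in ${\mathbb F}_{2^n}$ exactly when $n$ is even, making all three roots $\theta_i$ rational forces $Q$ to split over ${\mathbb F}_{2^n}$ when $n$ is even and to be irreducible over ${\mathbb F}_{2^n}$ (so that its roots live in ${\mathbb F}_{2^{2n}}$) when $n$ is odd; in both cases this is the single condition $\Tr_{{\mathbb F}_{2^n}/{\mathbb F}_2}(b^3/c^2)=\Tr_{{\mathbb F}_{2^n}/{\mathbb F}_2}(1)$, supplemented by the requirement that the roots of $Q$ be cubes in the relevant field so that the Lagrange resolvents, and hence the $\theta_i$, become rational. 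Tracking the cube-root-of-unity field and the cube condition through these two parities is the delicate point; the monodromy reduction of the first three paragraphs is otherwise formal.
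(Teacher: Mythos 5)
Your proof is correct, and although it rests on the same object as the paper's proof --- the resolvent cubic $x^3+bx^2+c^2$ of the depressed quartic --- the way you extract the Klein group from it is genuinely different. The paper checks separability ($c\neq 0$), reads off from \cite{Conrad} both the quadratic resolvent $R_2(x)=x^2+c^2x+(b^3+c^2)c^2$ and the cubic resolvent $R_3$, verifies (via Williams' criterion) that \emph{both} are reducible, and then invokes Conrad's classification theorem (Theorem 3.4 of \cite{Conrad}) to conclude that the arithmetic group is the Klein group; the geometric group is then obtained as a transitive \emph{normal} subgroup of it. You instead compute $R_3$ from scratch via the pairings $\theta_i$, note that complete splitting of $R_3$ (whose roots are distinct, again because $c\neq 0$) forces the image of the Galois group in $\Sgoth_4/V\cong\Sgoth_3$ to be trivial, where $V$ is the Klein subgroup, and then use transitivity to force equality with $V$; since you also prove geometric irreducibility directly (via the degree of $\overline{\mathbb{F}}_{2^n}(x)/\overline{\mathbb{F}}_{2^n}(h(x))$), the identical argument applies to both the arithmetic and the geometric group. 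This buys two things: the quadratic resolvent $R_2$ never enters (complete splitting of $R_3$ already places the group inside $V\subset\Agoth_4$, so that hypothesis of Conrad's theorem is automatic), and the geometric group is handled symmetrically rather than through normality, making the argument self-contained where the paper argues by citation. One remark: your last paragraph treats the parenthetical trace/cube criterion as the ``main technical obstacle,'' but it is exactly Theorem 1 of \cite{Williams}, which the paper simply cites as a known equivalence; it is not needed for the implication you are proving, since the operative hypothesis is the complete factorization of $R_3$ itself, so the point you leave delicate is not a gap in the proof of the proposition.
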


\begin{proof}
If we suppose that $a_0=1$ and $a_1=a_3=0$,
 then we get by Formula (\ref{Lalphaf}), for any $\alpha\in{\mathbb F}_{2^n}^{\ast}$:
 
 \begin{align*}
 L_{\alpha}f(x)&=\alpha^2 x^4 +(\alpha^6+\alpha^2a_4+\alpha a_5)x^2  +
 (\alpha^3 a_5+\alpha a_7)x \\
 & +
 \alpha^{10}+\alpha^8a_2+\alpha^6a_4+\alpha^5 a_5+\alpha^4a_6+\alpha^3 a_7+\alpha^2a_8+\alpha a_9
 \end{align*}

We set $g:=\frac{1}{\alpha^2}L_{\alpha}f$ and we
consider the irreducible polynomial
$$g(x)-t=\frac{1}{\alpha^2}L_{\alpha}f(x)-t \in{\mathbb F}_{2^n}(t)[x]$$
(recall that any polynomial $P(x)\in {\mathbb F}_{2^n}[x]$ gives rise to an irreducible  polynomial $P(x)-t$ in the ring
${\mathbb F}_{2^n}(t)[x]$).
We have:
 \begin{align*}
g(x)-t & =x^4+\frac{\alpha^5+\alpha a_4+ a_5}{\alpha} x^2
+ \frac{\alpha^2 a_5+ a_7}{\alpha} x \\
& + \frac{\alpha^{9}+\alpha^7a_2+\alpha^5a_4+\alpha^4 a_5+\alpha^3a_6+\alpha^2 a_7+\alpha a_8+ a_9}{\alpha}
+t.
 \end{align*}
So we have 
$$g(x)-t=x^4+b x^2 + c x + d$$
 with $b:=\frac{\alpha^5+\alpha a_4+ a_5}{\alpha}$, $c:= \frac{\alpha^2 a_5+ a_7}{\alpha}$
and $d:=\frac{\alpha^{9}+\alpha^7a_2+\alpha^5a_4+\alpha^4 a_5+\alpha^3a_6+\alpha^2 a_7+\alpha a_8+ a_9}{\alpha}
+t$.

The monic quartic polynomial $g(x)-t$ in ${\mathbb F}_{2^n}(t)[x]$ with no cubic term is separable if and only if $c\not=0$ 
(see  the illustration of Theorem 3.4.  of \cite{Conrad})
and  its quadratic resolvent $R_2(x)$ and its cubic resolvent $R_3(x)$ are given by (see  equations (3.4) and (3.5) of \cite{Conrad}):
$$R_2(x)=x^2+c^2x+(b^3+c^2)c^2$$
and
$$R_3(x)=x^3+bx^2+c^2.$$

It is well-known that $R_2(X)$ is reducible if and only if 
${\Tr}_{{\mathbb F}_{2^n}/{\mathbb F}_2}  \left( \frac{(b^3+c^2)c^2}{c^4} \right)=0$
i.e.
${\Tr}_{{\mathbb F}_{2^n}/{\mathbb F}_2}  \left( \frac{b^3}{c^2} \right)={\Tr}_{{\mathbb F}_{2^n}/{\mathbb F}_2}(1).$

Let us consider now the reducibility of the polynomial $R_3(x)=x^3+bx^2+c^2$.
The substitution $z=x+b$ eliminates the quadratic term: it gives the equation
$z^3+b^2z+c^2=0$.

Theorem 1 of \cite{Williams} gives that the polynomial $z^3+b^2z+c^2$ (with $c\not= 0$) is reducible if and only if

(i) ${\Tr}_{{\mathbb F}_{2^n}/{\mathbb F}_2}  \left( \frac{b^6}{c^4} \right)\not= {\Tr}_{{\mathbb F}_{2^n}/{\mathbb F}_2}(1)$ (in this case the polynomial has a unique root in ${\mathbb F}_{2^n}$),

 or

(ii) ${\Tr}_{{\mathbb F}_{2^n}/{\mathbb F}_2}  \left( \frac{b^6}{c^4} \right)={\Tr}_{{\mathbb F}_{2^n}/{\mathbb F}_2}(1)$ and the roots of the polynomial $Q(T):=T^2+c^2T+b^6$ are cubes in ${\mathbb F}_{2^n}$ if $n$ is even, or in ${\mathbb F}_{2^{2n}}$ if $n$ is odd (in this case the polynomial $z^3+b^2z+c^2$
factors over ${\mathbb F}_{2^n}$ as the product of three linear factors).

So if 
$\alpha\in{\mathbb F}_{2^n}^{\ast}$ is such that 
${\Tr}_{{\mathbb F}_{2^n}/{\mathbb F}_2}  \left( \frac{b^6}{c^4} \right)={\Tr}_{{\mathbb F}_{2^n}/{\mathbb F}_2}(1)$,
i.e. 
${\Tr}_{{\mathbb F}_{2^n}/{\mathbb F}_2}  \left( \frac{b^3}{c^2} \right)={\Tr}_{{\mathbb F}_{2^n}/{\mathbb F}_2}(1)$,
and also such that the roots of the polynomial $Q(T)$ are cubes in ${\mathbb F}_{2^n}$ or in ${\mathbb F}_{2^{2n}}$ (according as
$n$ is even or  odd), then 
 the polynomials $R_2(x)$ and $R_3(x)$ are reducibles.

Finally,  with the hypothesis of the proposition, $g(x)-t$ is a separable irreducible quartic polynomial of ${\mathbb F}_{2^n}(t)[x]$ such that its quadratic and cubic resolvents are reducibles. By Theorem 3.4. of \cite{Conrad}, we obtain that 
the Galois group $G_g$ of the polynomial $g(x)-t=\frac{1}{\alpha^2}L_{\alpha}f(x)-t$, which is
the arithmetic monodromy group 
of the polynomial $g(x)=\frac{1}{\alpha^2}L_{\alpha}f(x)$,
 is isomorphic to the Klein group ${\mathbb Z}/2{\mathbb Z}\times{\mathbb Z}/2{\mathbb Z}$.

Since the polynomial $g(x)-t$ is irreducible over ${\mathbb F}_{2^n}(t)$, the arithmetic and the geometric monodromy groups of $\frac{1}{\alpha^2}L_{\alpha}f(x)$, seen as permutation groups, are transitive subgroups of the symmetric group $\Sgoth_4$. 
It is well-known (see \cite{Conrad} for example) that the only transitive subgroups of $\Sgoth_4$ are $\Sgoth_4$ himself, the alternate group $\Agoth_4$, 
three conjugate subgroups isomorphic to the dihedral group $D_4$ of order 8, three conjugate subgroups isomorphic to the cyclic group ${\mathbb Z}/4{\mathbb Z}$ and one subgroup isomorphic to the Klein group ${\mathbb Z}/2{\mathbb Z}\times{\mathbb Z}/2{\mathbb Z}$.

Since the geometric monodromy group $\overline{G}_g$ of $g(x)=\frac{1}{\alpha^2}L_{\alpha}f(x)$ is a normal subgroup of $G_g$ and a transitive subgroup of $\Sgoth_4$, we obtain that $\overline{G}_g$ is also the Klein group ${\mathbb Z}/2{\mathbb Z}\times{\mathbb Z}/2{\mathbb Z}$.
\end{proof}

%%%%%%%%%%%%

%%%

\begin{remark}\label{separable}
The condition $c\not=0$ in the previous theorem is equivalent to saying that the polynomial
$g(x)-t:=\frac{1}{\alpha^2}L_{\alpha}f(x)-t \in{\mathbb F}_{2^n}(t)[x]$
is separable (see  the illustration of Theorem 3.4.  of \cite{Conrad}).
\end{remark}

\begin{remark}\label{cubes}
The condition in the previous theorem saying that
the polynomial $R_3(x):=x^3+bx^2+c^2$ 
factors over ${\mathbb F}_{2^n}$ as the product of three linear polynomials
 is equivalent to saying that (see Theorem 1 of \cite{Williams}):
${\Tr}_{{\mathbb F}_{2^n}/{\mathbb F}_2}  \left( \frac{b^3}{c^2} \right)={\Tr}_{{\mathbb F}_{2^n}/{\mathbb F}_2}(1)$ and the roots of the equation $T^2+c^2T+b^6$ are cubes in ${\mathbb F}_{2^n}$
(respectively in  ${\mathbb F}_{2^{2n}}$)
 if $n$ is even (respectively if $n$ is odd).
\end{remark}

%%%%%%%
%%% COROLLARY
%%%%%%%%%%%%

\begin{example}\label{Klein-bis}
Let $f=\sum_{i=0}^{10}a_{10-i}x^i\in{\mathbb F}_{2^n}[x]$ be a polynomial
of degree $10$ with $a_0=1$, $a_1=a_3=a_4=a_5=0$ and $a_7=1$, i.e. the polynomial $f$
has the form
$$f(x)=x^{10}+a_2x^8+a_6x^4+x^3+a_8x^2+a_9x+a_{10}$$
with $a_2, a_6, a_8, a_9,a_{10}$ in ${\mathbb F}_{2^n}$.
Let us show that if $n\equiv 0\pmod 4$  then there exists $\alpha\in{\mathbb F}_{2^n}^{\ast}$ 
such that
the polynomial
$\frac{1}{\alpha^2}L_{\alpha}f(x)$
has algebraic and geometric monodromy groups isomorphic to the Klein group.

Indeed, let $\alpha\in{\mathbb F}_{2^n}^{\ast}$ and consider, as in the proof of  Proposition \ref{Klein}, the irreducible polynomial
$$g(x)-t:=\frac{1}{\alpha^2}L_{\alpha}f(x)-t \in{\mathbb F}_{2^n}(t)[x].$$ 
So we have 
$$g(x)-t=x^4+b x^2 + c x + d$$
 with $b:=\alpha^4$, $c:= \frac{1}{\alpha}$ and 
 $d:=\frac{\alpha^{9}+\alpha^7a_2+\alpha^3a_6+\alpha^2 +\alpha a_8+ a_9}{\alpha}
+t$.

 Since  $c\not= 0$ then, by Remark \ref{separable},  the polynomial $g(x)-t$ is separable.
Moreover, the condition ${\Tr}_{{\mathbb F}_{2^n}/{\mathbb F}_2}  \left( \frac{b^3}{c^2} \right)={\Tr}_{{\mathbb F}_{2^n}/{\mathbb F}_2}(1)$ 
in Proposition \ref{Klein} remains to
$${\Tr}_{{\mathbb F}_{2^n}/{\mathbb F}_2}  (\alpha^{7})={\Tr}_{{\mathbb F}_{2^n}/{\mathbb F}_2}(1)=n\pmod 2.$$

Now the equation $T^2+c^2T+b^6=0$ becomes 
$$T^2+\frac{1}{\alpha^2}T+\alpha^{24}=0.$$
We are looking for $\alpha$ in ${\mathbb F}_{2^n}^{\ast}$ such that the solutions of this equation are cubes in ${\mathbb F}_{2^n}$. 
Note that these roots belong to ${\mathbb F}_{2^n}$ if and only if  ${\Tr}_{{\mathbb F}_{2^n}/{\mathbb F}_2}(\alpha^{28})=0$, i.e. ${\Tr}_{{\mathbb F}_{2^n}/{\mathbb F}_2}(\alpha^7)=0$.

\bigskip

But one can show that there exists $\alpha\in {\mathbb F}_{2^4}^{\ast}$ such that the polynomial $T^2+\frac{1}{\alpha^2}T+\alpha^{24}$ has roots  which are cubes in ${\mathbb F}_{16}^{\ast}$ and with
${\Tr}_{{\mathbb F}_{2^4}/{\mathbb F}_2}(\alpha^7)=0$.
Indeed, take  ${\mathbb F}_{16}={\mathbb F}_2[X]/(X^4+X^3+1)={\mathbb F}_2(\theta)$ and choose 
$\alpha=\theta^{10}$. Then 
$$Q(T)=T^2+\theta^{10} T+1=T^2+\frac{1}{(\theta^{10})^2} T + (\theta^{10})^{24}=(T+(\theta^2)^3)(T+(\theta^{3})^3)$$
with
$${\Tr}_{{\mathbb F}_{2^4}/{\mathbb F}_2}(\alpha^7)={\Tr}_{{\mathbb F}_{2^4}/{\mathbb F}_2}(\theta^{70})=
{\Tr}_{{\mathbb F}_{2^4}/{\mathbb F}_2}(\theta^{10})=
{\Tr}_{{\mathbb F}_{2^4}/{\mathbb F}_2}(\theta^{5})=
{\Tr}_{{\mathbb F}_{2^4}/{\mathbb F}_2}(\alpha^2)
=0.
$$

%==========
In conclusion, if $f=\sum_{i=0}^{10}a_{10-i}x^i\in{\mathbb F}_{2^n}[x]$ is a polynomial
of degree $10$ with $a_0=a_7=1$ and $a_1=a_3=a_4=a_5=0$, 
and if $n\equiv 0\pmod 4$  there exists $\alpha\in{\mathbb F}_{2^n}^{\ast}$ (since in this case ${\mathbb F}_{16}$ is included in ${\mathbb F}_{2^n}$)
such that
$c\not=0$ and,
by Remark \ref{cubes},
such that  the polynomial $R_3(x):=x^3+bx^2+c^2$ 
has all its  roots in ${\mathbb F}_{2^n}$.
Hence by Proposition \ref{Klein}
the polynomial
$\frac{1}{\alpha^2}L_{\alpha}f(x)$
has algebraic and geometric monodromy groups isomorphic to the Klein group.
\end{example}
%\end{proof}

\bigskip
\bigskip

Recall that $F$ is the splitting field  of the
polynomial 
$L_{\alpha}f(x)-t$
 over the field ${\mathbb F}_{2^n}(t)$ and 
 $\Omega={\mathbb F}_{2^n}(x_0,\ldots, x_{d-1})$ is
 the compositum of the fields $F(x_i)$, where 
 $u_0,\ldots,u_{d-1}$ are the roots of $L_{\alpha}f(x)=t$
 and
 $x_i$
are the roots of
$x^2+ \alpha x=u_i$.

Now let us give a sufficient condition for the extension $\Omega/F$ to be geometric.

\begin{lemma}\label{Second_Floor}
Let $f=\sum_{i=0}^{10}a_{10-i}x^i\in{\mathbb F}_{2^n}[x]$ be a polynomial
of degree $10$ with $a_0=1$, $a_1=a_3=0$.
Let $\alpha\in{\mathbb F}_{2^n}^{\ast}$ and set
$b:=\frac{\alpha^5+\alpha a_4+ a_5}{\alpha}$ and $c:= \frac{\alpha^2 a_5+ a_7}{\alpha}$.
Suppose that $c\not=0$ and that the polynomial $R_3(x):=x^3+bx^2+c^2$ 
factors over ${\mathbb F}_{2^n}$ as the product of three linear factors.

Then the extension $\Omega/F$ is geometric 
  as soon as  the equation $x^2+ \alpha x = \frac{\alpha^5+\alpha a_4 + a_5}{\alpha}$
has a solution in $\mathbb{F}_{2^n}$.
\end{lemma}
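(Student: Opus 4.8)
The plan is to treat the second floor $\Omega/F$ as a compositum of Artin--Schreier extensions and to reduce its regularity to the single solvability condition in the statement, following the pattern of Theorem~\ref{Main} but now for the quartic $g=\frac{1}{\alpha^2}L_\alpha f$. Recall that $\Omega=F(x_0,x_1,x_2,x_3)$ with $x_i^2+\alpha x_i=u_i$, where $u_0,\dots,u_3$ are the roots of $g(x)-t=x^4+bx^2+cx+d$; since there is no cubic term one has $\sum_i u_i=0$, while $b$ and $c$ are \emph{constants} (independent of $t$). I would first record that each individual extension $F(x_i)/F$ is already regular: at the place of $F$ above $t=\infty$ the element $u_i$ has a pole of odd order, so, writing $\wp(z)=z^2+z$, the class of $u_i/\alpha^2$ in $F/\wp(F)$ cannot be that of a constant (constants have no pole, and $\wp$ of a function with a pole has even pole order). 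Hence any constant field extension inside $\Omega$ must come from \emph{combinations} of the $x_i$.

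This is the key point. For any pair one has $(x_i+x_j)^2+\alpha(x_i+x_j)=u_i+u_j$, and I claim each $u_i+u_j$ is a constant. Indeed $g'(x)=c$ in characteristic $2$, so $\prod_{k\neq i}(u_i+u_k)=g'(u_i)=c$; together with $\sum_i u_i=0$ this makes the three distinct values $s_1,s_2,s_3$ of $u_i+u_j$ the roots of $S^3+bS+c$, equivalently the $s_k^2$ are the roots of the depressed cubic $z^3+b^2z+c^2$ attached to $R_3$. The hypothesis that $R_3$ splits over $\mathbb{F}_{2^n}$ therefore forces $s_1,s_2,s_3\in\mathbb{F}_{2^n}$, so every $x_i+x_j$ is algebraic over $\mathbb{F}_{2^n}$, i.e.\ a constant of $\Omega$. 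Since each such constant satisfies an Artin--Schreier equation $\xi^2+\alpha\xi=s_k$ over $\mathbb{F}_{2^n}$, the constant field $\mathbb{F}_{2^n}^{\Omega}$ is contained in $\mathbb{F}_{2^{2n}}$ and is generated by the $x_i+x_j$; thus $\Omega/F$ is regular exactly when all of the $x_i+x_j$ already lie in $\mathbb{F}_{2^n}$.

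It then remains to convert ``all $x_i+x_j\in\mathbb{F}_{2^n}$'' into the stated condition. By Hilbert~90, $\xi^2+\alpha\xi=s_k$ is solvable in $\mathbb{F}_{2^n}$ if and only if $\Tr_{\mathbb{F}_{2^n}/\mathbb{F}_2}(s_k/\alpha^2)=0$, and the three classes $[s_k/\alpha^2]$ add up to $0$ because $s_1+s_2+s_3=0$. Here I would invoke the generalization of Proposition~4.6 of \cite{YvesFabienFelipe}: the obstruction is carried by the coefficient $b$ of $g$, which is precisely the ratio of the $x^2$- and $x^4$-coefficients of $L_\alpha f$ (hence $b=s_1s_2+s_2s_3+s_3s_1$), and its triviality modulo $\wp$ is the condition $\Tr_{\mathbb{F}_{2^n}/\mathbb{F}_2}(b/\alpha^2)=0$, i.e.\ the solvability of $x^2+\alpha x=\frac{\alpha^5+\alpha a_4+a_5}{\alpha}$ in $\mathbb{F}_{2^n}$. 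Under this hypothesis the constant field of $\Omega$ collapses to $\mathbb{F}_{2^n}$, so $\Omega/F$ is geometric.

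The hard part is exactly this last reduction. A priori the regularity requires controlling the three separate Artin--Schreier classes $[s_k/\alpha^2]$, whereas the hypothesis only constrains their symmetric combination $b=s_1s_2+s_2s_3+s_3s_1$; one must show that, in the presence of the $R_3$-splitting hypothesis and of $\sum_i u_i=0$, controlling $b$ is enough to push all three constants $x_i+x_j$ into $\mathbb{F}_{2^n}$. I would attack this by writing out the $\mathbb{F}_2$-span of $\{[u_i/\alpha^2]\}$ inside $F/\wp(F)$, using the Klein four-group action on $\{u_0,\dots,u_3\}$ to identify its intersection with the (one-dimensional) space of constant classes, and tracing how that intersection is detected by $\Tr_{\mathbb{F}_{2^n}/\mathbb{F}_2}(b/\alpha^2)$. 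This is the step that genuinely generalizes Proposition~4.6, and it is where I expect the real work---and any restriction that has to be imposed on $n$---to lie.
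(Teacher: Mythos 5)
Your structural analysis of $\Omega/F$ is correct, and it is in fact sharper than the paper's own proof: the two arguments contradict each other on the key point, and you are the one who is right. Since $a_1=a_3=0$, the polynomial $g(x)-g(0)=x^4+bx^2+cx$ is additive with \emph{constant} coefficients, so the pairwise sums $u_i+u_j$ are exactly the three nonzero roots $s_1,s_2,s_3$ of $S^3+bS+c$, as you claim; under hypothesis (i) they lie in $\mathbb{F}_{2^n}$, hence $\Omega=F(x_0)(\xi_1,\xi_2,\xi_3)$ with $\xi_k:=x_0+x_k$ constants satisfying $\xi_k^2+\alpha\xi_k=s_k$, and $\Omega/F$ is geometric if and only if $\Tr_{{\mathbb F}_{2^n}/{\mathbb F}_2}(s_k/\alpha^2)=0$ for $k=1,2,3$. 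The paper instead asserts $v_{P_\infty}(u_0+u_i)=-2$, which is impossible: each $u_j$ has a simple pole at the unique place $P_\infty$ above $t=\infty$ (the paper's own first step), so $v_{P_\infty}(u_0+u_i)\geq -1$ by the ultrametric inequality; in truth $u_0+u_i$ is a nonzero constant, the fields $k(u_0+u_i)$ in the paper's diagram collapse to $k$, and the genuine quadratic subfields of $F/k(t)$ are the $k(u_0^2+s_iu_0)$.

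However, the gap you flag in your last two paragraphs is genuine and cannot be closed: the single condition $\Tr_{{\mathbb F}_{2^n}/{\mathbb F}_2}(b/\alpha^2)=0$, with $b=s_1s_2+s_1s_3+s_2s_3$, does not control the three classes $\Tr_{{\mathbb F}_{2^n}/{\mathbb F}_2}(s_k/\alpha^2)$, and Lemma \ref{Second_Floor} as stated is false. Concretely, take $\mathbb{F}_{16}=\mathbb{F}_2(\theta)$ with $\theta^4=\theta+1$, $\alpha=1$, and $f(x)=x^{10}+\theta^4x^6+\theta^{14}x^3$ (so $a_4=\theta^4$, $a_5=0$, $a_7=\theta^{14}$). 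Then $b=1+\theta^4=\theta$, $c=\theta^{14}\neq 0$, and
$$S^3+\theta S+\theta^{14}=(S+\theta^3)(S+\theta^4)(S+\theta^7),$$
so $s_1=\theta^3$, $s_2=\theta^4$, $s_3=\theta^7$ all lie in $\mathbb{F}_{16}$ and $R_3$ splits: hypothesis (i) holds. Moreover $\Tr_{\mathbb{F}_{16}/\mathbb{F}_2}(\theta)=0$ (indeed $x=\theta^9$ solves $x^2+x=\theta$), so hypothesis (ii) holds. Yet $\Tr_{\mathbb{F}_{16}/\mathbb{F}_2}(\theta^3)=\Tr_{\mathbb{F}_{16}/\mathbb{F}_2}(\theta^7)=1$, so $\xi_1=x_0+x_1$, which satisfies $\xi_1^2+\xi_1=\theta^3$, generates $\mathbb{F}_{256}$: the constant field of $\Omega$ is $\mathbb{F}_{256}$ while that of $F$ is $\mathbb{F}_{16}$, and $\Omega/F$ is \emph{not} geometric (here $\Gal(\Omega/F)\cong({\mathbb Z}/2{\mathbb Z})^2$, not $({\mathbb Z}/2{\mathbb Z})^3$ as the paper claims). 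Reading the same data in $\mathbb{F}_{2^n}$ for any $n\equiv 4\pmod 8$ gives counterexamples for arbitrarily large $n$, which also invalidates the proof of Theorem \ref{cas_a_1=a_3=0} as written.

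So the honest conclusion is: your reduction identifies the correct necessary and sufficient condition, namely that $x^2+\alpha x=s_k$ be solvable in $\mathbb{F}_{2^n}$ for each root $s_k$ of $S^3+bS+c$ (since the three Artin--Schreier classes sum to zero, it suffices that two of them vanish), and no symmetric condition on $b$ alone can imply this. Your proposal cannot be completed as stated -- but that is because the statement itself is wrong, not because your method is; the stronger three-trace hypothesis is what should replace condition (ii) in the Lemma and in Theorem \ref{cas_a_1=a_3=0}.
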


\begin{proof}
We begin proving that 
if $u$ is a root of $L_{\alpha}f(x)-t$ in $F$, then,
for each place $\wp$ of $F$ above the place ${\infty}$ at infinity
of ${\mathbb F}_{2^n}(t)$, we have
 that $u$ has a simple pole at $\wp$.
 
 Indeed, the  field ${\mathbb F}_{2^n}(t)(u)$ is just the rational function field ${\mathbb F}_{2^n}(u)$.
 The place at infinity $P_{\infty}$ of ${\mathbb F}_{2^n}(u)$ is the pole of $u$ and it is the place above the place at infinity ${\infty}$ of ${\mathbb F}_{2^n}(t)$ (which corresponds to the  pole of $t$).
 Thus the valuation of $u$ at  $P_{\infty}$ is given by $v_{P_{\infty}}(u)=-1$ and therefore $v_{P_{\infty}}(L_{\alpha}f(u))=-\deg(L_{\alpha}f(x))$.
 Since the ramification index $e(P_{\infty} \vert {\infty})$ of $P_{\infty}$ over ${\infty}$ 
 verify:
 $$v_{P_{\infty}}(L_{\alpha}f(u))=v_{P_{\infty}}(t)=e(P_{\infty} \vert {\infty}) v_{\infty}(t)=-e(P_{\infty} \vert {\infty})$$
  thus we obtain:
 $$e(P_{\infty} \vert {\infty})=\deg(L_{\alpha}f(x))=4.$$

 But 
 the hypotheses on $c$ and $R_3(x)$ imply
 by Proposition \ref{Klein} that the Galois extension $F/{\mathbb F}_{2^n}(t)$ has Galois group the Klein group of order 4
 ( the place at infinity of ${\mathbb F}_{2^n}(t)$ is then totally ramified in ${\mathbb F}_{2^n}(u)$).
 We conclude that
 $F={\mathbb F}_{2^n}(u)$ and then $u$ has a simple pole at $\wp=P_{\infty}$.
 
 Now we show that if 
 $J \subset \{ 0, 1, 2, 3 \}$ is neither empty nor the whole set then
$ \sum_{j \in J} u_j $  has a pole at the place at infinity $P_{\infty}$ of $F$.
Since the coefficient of $x^3$ in the polynomial $L_{\alpha}f$ is zero (see Formula (\ref{Lalphaf})), we have that
 $u_0+u_1+u_2+u_3=0$.
 We are then reduced to show that $u_0+u_1$, $u_0+u_2$ and $u_0+u_3$ have a pole at $P_{\infty}$.
 But we are in the situation where the Galois extension $F/{\mathbb F}_{2^n}(t)$ has a Galois group 
 isomorphic to 
 ${\mathbb Z}/2{\mathbb Z}\times{\mathbb Z}/2{\mathbb Z}$, so the following diagram summarize the situation (where $k:={\mathbb F}_{2^n}$ and all the extensions have degree 2).

 \begin{center}
\begin{tikzpicture}[node distance=2cm]
 \node (k)                  {$k(t)$};
 \node (k1plus2) [above of=k]  {$k(u_0+u_2)$};
 \node (k12)  [above of=k1plus2]   {$F$};
 \node (k2)  [right of=k1plus2]  {$k(u_0+u_3)$};
 \node (k1)  [left of=k1plus2]   {$k(u_0+u_1)$};
 \draw (k)   -- (k2);
 \draw (k)   -- (k1plus2);
 \draw (k)   -- (k1);
 \draw (k2)  -- (k12);
 \draw (k1)  -- (k12);
 \draw (k1plus2)  -- (k12);
\end{tikzpicture}
\end{center}

If we denote by $\infty_i$ the place at infinity of ${\mathbb F}_{2^n}(u_0+u_i)$, for $i=1,2,3$, we have that the ramification index 
$e(P_{\infty}\vert \infty_i)=e(\infty_i\vert \infty)=2$ for all $i$ since $\infty$ is totally ramified in the extension $F/{\mathbb F}_{2^n}(t)$.
 
So 
we have 
$$v_{P_{\infty}}(u_{0}+u_{i})=e(P_{\infty}\vert \infty_i) v_{\infty_i}(u_0+u_i) = 2\times (-1)=-2\leq -1$$
which proves that $P_{\infty}$ is a pole of $u_0+u_i$.

Then, the proof of Proposition 4.6 of \cite{YvesFabienFelipe} remains true with polynomials of degree 10 with geometric and arithmetic monodromy groups the Klein group:
if there exists $x \in {\mathbb F}_{2^n}$ such that 
$x^2+ \alpha x = b_1/b_0$
where the $b_i's$ are defined by $\frac{1}{\alpha^2}L_{\alpha}f(x)=\sum_{i=0}^4b_{4-i}x^i$
 then 
$\Gal \left(  F(x_0, x_1, x_2,x_{3}) /  F \right)$ 
and
$\Gal \left(  F{\mathbb F}_{2^n}^{\Omega}(x_0, x_1, x_2,x_{3}) /  F{\mathbb F}_{2^n}^{\Omega} \right)$ 
 are isomorphic to  $\left( {\mathbb Z} / 2{\mathbb Z} \right)^{3}$,
 where ${\mathbb F}_{2^n}^{\Omega}$ denotes the algebraic closure of ${\mathbb F}_{2^n}$ in $\Omega$
 and $F{\mathbb F}_{2^n}^{\Omega}$ denotes the compositum of the fields $F$ and ${\mathbb F}_{2^n}^{\Omega}$.
  The coefficients $b_i$'s come from Equation (\ref{Lalphaf}): $b_1/b_0=\frac{\alpha^5+\alpha a_4 + a_5}{\alpha}$, and   
 the existence of a solution in ${\mathbb F}_{2^n}$ of the equation 
$x^2+ \alpha x = b_1/b_0$
is exactly the last condition of the Lemma.
Thus we conclude that the extension $\Omega/F$ 
 is geometric.

\end{proof}

\begin{theorem}\label{cas_a_1=a_3=0}
Let $f=\sum_{i=0}^{10}a_{10-i}x^i\in{\mathbb F}_{2^n}[x]$ be a polynomial
of degree $10$ with $a_1=a_3=0$.

Suppose that there exists $\alpha\in{\mathbb F}_{2^n}^{\ast}$ such that:
\begin{enumerate}[label=(\roman*)]
\item
$c:= \frac{\alpha^2 a_5+ a_7}{\alpha}\not=0$
 and the polynomial $R_3(x):=x^3+bx^2+c^2$ 
has all its  roots in ${\mathbb F}_{2^n}$ where
$b:=\frac{\alpha^5+\alpha a_4+ a_5}{\alpha}$,

\item
 and ${\Tr}_{{\mathbb F}_{2^n}/{\mathbb F}_2}  \left( \frac{\alpha^5+\alpha a_4 + a_5}{\alpha^3}\right)=0$.
\end{enumerate}
Then $\delta_{{\mathbb F}_{2^n}}(f)=8$ if $n$ is sufficiently large (namely if $n\geq 15$).
\end{theorem}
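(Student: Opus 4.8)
The plan is to follow the same monodromy-and-Chebotarev route used for Theorem \ref{Main}, but now with the degree-$4$ polynomial $L_{\alpha}f$ in place of the degree-$3$ one, so that the derivative $D_{\alpha}f$ has degree $8$ and the target value of the differential uniformity is $8$. First I would normalize $a_0=1$, which is harmless since scaling $f$ by a nonzero constant does not change $\delta_{{\mathbb F}_{2^n}}(f)$. Since $a_1=a_3=0$, Formula (\ref{Dalphaf}) shows that for \emph{every} $\alpha\in{\mathbb F}_{2^n}^{\ast}$ the leading coefficient of $D_{\alpha}f$ is $a_0\alpha^2=\alpha^2\neq 0$, so $D_{\alpha}f$ has degree exactly $8$. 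Consequently every equation $D_{\alpha}f(x)=\beta$ has at most $8$ solutions, which already gives the upper bound $\delta_{{\mathbb F}_{2^n}}(f)\leq 8$. It therefore suffices to produce a single pair $(\alpha,\beta)$ for which $D_{\alpha}f(x)=\beta$ has exactly $8$ distinct roots in ${\mathbb F}_{2^n}$.

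Next I would assemble the two monodromy inputs to show that the tower $\Omega/{\mathbb F}_{2^n}(t)$ attached to the chosen $\alpha$ is geometric. Condition (i) is exactly the hypothesis of Proposition \ref{Klein}, which yields that the arithmetic and geometric monodromy groups of $g=\frac{1}{\alpha^2}L_{\alpha}f$ coincide and are isomorphic to the Klein group, so $F/{\mathbb F}_{2^n}(t)$ is geometric with $[F:{\mathbb F}_{2^n}(t)]=4$. For the upper floor, the key observation is that condition (ii) is precisely the Hilbert~90 (Artin--Schreier) solvability criterion for the hypothesis of Lemma \ref{Second_Floor}: substituting $x=\alpha y$ turns $x^2+\alpha x=b$ into $y^2+y=b/\alpha^2$ with $b/\alpha^2=\frac{\alpha^5+\alpha a_4+a_5}{\alpha^3}$, so this equation has a root in ${\mathbb F}_{2^n}$ if and only if $\Tr_{{\mathbb F}_{2^n}/{\mathbb F}_2}\!\left(\frac{\alpha^5+\alpha a_4+a_5}{\alpha^3}\right)=0$. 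Hence Lemma \ref{Second_Floor} applies and $\Omega/F$ is geometric with $\Gamma=\overline\Gamma\cong({\mathbb Z}/2{\mathbb Z})^3$, so $\Omega/{\mathbb F}_{2^n}(t)$ is geometric of degree $d_{\Omega}=4\cdot 8=32$.

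Finally I would run the effective Chebotarev estimate of Pollack exactly as in the proof of Theorem \ref{Main}: the number $V$ of degree-one places of ${\mathbb F}_{2^n}(t)$ that split completely in $\Omega$ satisfies $V\geq \frac{2^n}{d_{\Omega}}-\frac{2}{d_{\Omega}}(g_{\Omega}2^{n/2}+g_{\Omega}+d_{\Omega})$, and Lemma~14 of \cite{Pollack} bounds the genus by $g_{\Omega}\leq \frac{1}{2}(\deg D_{\alpha}f-3)d_{\Omega}+1=\frac{1}{2}(8-3)\cdot 32+1=81$. Plugging $d_{\Omega}=32$ and $g_{\Omega}\leq 81$ in, $V\geq 1$ holds as soon as $2^n-162\cdot 2^{n/2}-258>0$, which fails at $n=14$ and holds at $n=15$, giving the stated threshold. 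Such a totally split place furnishes $\beta\in{\mathbb F}_{2^n}$ for which $D_{\alpha}f(x)+\beta$ splits in ${\mathbb F}_{2^n}[x]$ with no repeated factors, i.e. $8$ distinct roots, so $\delta_{{\mathbb F}_{2^n}}(f)\geq 8$; combined with the upper bound above this gives $\delta_{{\mathbb F}_{2^n}}(f)=8$. I expect the only genuinely delicate point to be the bookkeeping: correctly identifying condition (ii) with the solvability hypothesis of Lemma \ref{Second_Floor}, and tracking $d_{\Omega}$ and $g_{\Omega}$ to pin down $n\geq 15$. The substantive structural work is already contained in Proposition \ref{Klein} and Lemma \ref{Second_Floor}, and the novelty relative to Theorem \ref{Main} is that the uniform degree $8$ of $D_{\alpha}f$ over all nonzero $\alpha$ upgrades the lower bound to an equality.
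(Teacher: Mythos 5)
Your proposal is correct and follows essentially the same route as the paper's own proof: normalize $a_0=1$, use Proposition \ref{Klein} (condition (i)) for the geometricity of $F/{\mathbb F}_{2^n}(t)$, Lemma \ref{Second_Floor} together with Hilbert 90 (condition (ii)) for the geometricity of $\Omega/F$, and then Pollack's effective Chebotarev with $d_{\Omega}=32$ and $g_{\Omega}\leq 81$ to get a totally split place for $n\geq 15$. The only additions are details the paper leaves implicit --- the explicit upper bound $\delta_{{\mathbb F}_{2^n}}(f)\leq 8$ from the degree of $D_{\alpha}f$, and the numerical bookkeeping yielding the threshold $n\geq 15$ --- and these check out against the paper's figures.
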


\begin{proof}
Let $f$ be a polynomial as in the theorem.
Looking at its differential uniformity, one can suppose that $f$ is monic.
Condition (i) implies by Proposition \ref{Klein} that the polynomial
$\frac{1}{\alpha^2}L_{\alpha}f(x)$
has algebraic and geometric monodromy groups isomorphic to the Klein group.
Hence the splitting field $F$ of the polynomial $g(x):=\frac{1}{\alpha^2}L_{\alpha}f(x)-t$ is a geometric extension of ${\mathbb F}_{2^n}(t)$.
  
%  Moreover, by Proposition 4.6. of \cite{YvesFabienFelipe}, the extension $\Omega/F$ is regular 
%  as soon as  the equation $x^2+ \alpha x = \frac{b_1}{b_0}$
%has a solution in $\mathbb{F}_{2^n}$ where $g(x)=\sum_{i=0}^4b_{4-i}x^i$. 

 Moreover, by Lemma \ref{Second_Floor}, the extension $\Omega/F$ is geometric 
  as soon as  the equation 
  $x^2+ \alpha x = \frac{\alpha^5+\alpha a_4 + a_5}{\alpha}$
 % $x^2+ \alpha x = \frac{b_1}{b_0}$
has a solution in $\mathbb{F}_{2^n}$.
% where $L_{\alpha}f(x)=\sum_{i=0}^4b_{4-i}x^i$. 
By 
   the Hilbert'90 Theorem, this is equivalent to
   ${\Tr}_{{\mathbb F}_{2^n}/{\mathbb F}_2}  \left( \frac{\alpha^5+\alpha a_4 + a_5}{\alpha^3}\right)=0$,
   which is precisely Condition (ii).

Then we use the Chebotarev theorem, as in the proof of Theorem \ref{Main}, to obtain,
if $n$ is sufficiently large (namely here if $n\geq 15$), the existence of $\beta\in {\mathbb F}_{2^n}$ such that the polynomial 
$D_{\alpha}f(x)+\beta\alpha^2$ 
splits in $ \mathbb{F}_{2^n}[x]$ with no repeated factors.

Thus the differential uniformity of $f$ is 
equal to the degree of $D_{\alpha}f$ that is 8.
\end{proof}

\begin{example}
Let us come back to Example \ref{Klein-bis}, and since the differential uniformity is unchanged if we add an additive polynomial,
let us just consider the polynomial $f(x)=x^{10}+x^3\in{\mathbb F}_{2^n}[x]$.
We have seen that,
if $n\equiv 0\pmod 4$, then
 there exists
$\alpha\in{\mathbb F}_{16}^{\ast}\subset {\mathbb F}_{2^n}^{\ast}$ 
 such that the polynomial $T^2+\frac{1}{\alpha^2}T+\alpha^{24}$ has roots  which are cubes in ${\mathbb F}_{16}^{\ast}$ and 
with ${\Tr}_{{\mathbb F}_{16}/{\mathbb F}_2}(\alpha^7)={\Tr}_{{\mathbb F}_{16}/{\mathbb F}_2}(\alpha^2)=0$.
Hence  there exists
$\alpha\in {\mathbb F}_{2^n}^{\ast}$ 
such that the
 polynomial
$\frac{1}{\alpha^2}L_{\alpha}f(x)$
has algebraic and geometric monodromy groups isomorphic to the Klein group.
Moreover 
 the equation $x^2+ \alpha x = \frac{b_1}{b_0}$
has a solution in $\mathbb{F}_{2^n}$ 
since 
${\Tr}_{{\mathbb F}_{2^n}/{\mathbb F}_2}  \left( \frac{\alpha^5+\alpha a_4 + a_5}{\alpha^3} \right)=
{\Tr}_{{\mathbb F}_{2^n}/{\mathbb F}_2}(\alpha^2)
=0$.
Finally we conclude by Theorem \ref{cas_a_1=a_3=0}
that if $n$ is sufficiently large and $n\equiv 0\pmod 4$  then $\delta_{{\mathbb F}_{2^n}}(f)=8$.

\end{example}

%%%%%%%%%%%%

%%

%%%%%%%%%%%%

\bigskip
\noindent
{\bf Acknowledgements:}  The author would like to thank Fabien Herbaut for many discussions on this topic.

\bigskip
\bibliographystyle{plain}
%\bibliography{biblio_ter}

\end{document}